\documentclass[a4paper,11pt,leqno]{amsart}

\usepackage{amssymb}
\usepackage{vmargin}
\setmarginsrb{25mm}{25mm}{25mm}{20mm}{0pt}{0pt}{0pt}{12mm}

\newtheorem{theorem}{Theorem}[section]
\newtheorem{proposition}[theorem]{Proposition}
\newtheorem{lemma}[theorem]{Lemma}
\newtheorem{corollary}[theorem]{Corollary}

\newcommand{\N}{\mathbb N}

\newcommand{\R}{\mathbb R}
\newcommand{\C}{\mathbb C}
\newcommand{\ordd}[3]{d({#1},{#3}) = d( \{ {#1},{#2},{#3} \} )}

\newcommand{\om}[1]{\mathcal{O}({#1})}
\newcommand{\oom}[1]{\mathcal{O}_o({#1})}
\newcommand{\Hm}{{\mathcal{H}^1}}
\newcommand{\NS}{\mathcal{N}}
\newcommand{\DP}{\mathcal{D}}
\newcommand{\A}{\mathcal{A}}
\newcommand{\G}{\mathcal{G}}
\newcommand{\TK}{\mathcal{T}_K}
\newcommand{\m}{{\mu}}
\newcommand{\E}{\mathcal{E}}

\def\Xint#1{\mathchoice
   {\XXint\displaystyle\textstyle{#1}}%
   {\XXint\textstyle\scriptstyle{#1}}%
   {\XXint\scriptstyle\scriptscriptstyle{#1}}%
   {\XXint\scriptscriptstyle\scriptscriptstyle{#1}}%
   \!\int}
\def\XXint#1#2#3{{\setbox0=\hbox{$#1{#2#3}{\int}$}
     \vcenter{\hbox{$#2#3$}}\kern-.5\wd0}}
\def\dashint{\Xint-}

\begin{document}

\title{Menger curvature and rectifiability in metric spaces}
\author{Immo Hahlomaa}
\address{Department of Mathematics and Statistics, P.O. Box 35 (MaD), FI-40014 University of Jy\-v\"as\-ky\-l\"a, Finland}
\email{immo.a.hahlomaa@jyu.fi}
\subjclass[2000]{Primary 28A75; Secondary 51F99}
\date{}
\pagestyle{plain}

\begin{abstract}
We show that for any metric space $X$ the condition
\[ \int_X\int_X\int_X c(z_1,z_2,z_3)^2\, d\Hm z_1\, d\Hm z_2\, d\Hm z_3 < \infty, \]
where $c(z_1,z_2,z_3)$ is the Menger curvature of the triple $(z_1,z_2,z_3)$,
guarantees that $X$ is rectifiable. 
\end{abstract}

\maketitle

\section{Introduction}

Throughout the paper $(X,d)$ is a metric space.
Let $z_1$, $z_2$ and $z_3$ be three points of $X$.
The \emph{Menger curvature} of the triple $(z_1,z_2,z_3)$ is
\[ c(z_1,z_2,z_3) = \frac{2\sin\sphericalangle z_1z_2z_3}{d(z_1,z_3)}, \]
where
\[ \sphericalangle z_1z_2z_3 = \arccos \frac{d(z_1,z_2)^2+d(z_2,z_3)^2-d(z_1,z_3)^2}{2d(z_1,z_2)d(z_2,z_3)}. \]
Note that $c(z_1,z_2,z_3)$ is the reciprocal of the radius of the circle passing through
$x_1$, $x_2$ and $x_3$ whenever
$\{x_1,x_2,x_3\} \subset \R^2$ is an isometric triple for $\{z_1,z_2,z_3\}$.
For $K \in [1,\infty]$, a Borel subset $Z\subset X$ and a Borel measure $\m$ on $X$ we set
\[ c^2_K(Z,\m) = \int_{\TK(Z)} c(z_1,z_2,z_3)^2\, d\m^3(z_1,z_2,z_3), \]
where
\[ \TK(Z) = \left\{\, (z_1,z_2,z_3) \in Z^3\, : \, \text{$d(z_i,z_j) < Kd(z_k,z_l)$ for all $i,j,k,l\in\{1,2,3\}$, $k\neq l$}\, \right\}. \]
We also write
$c^2_K(Z) = c^2_K(Z,\Hm)$
and
$c^2(Z) = c^2_\infty(Z,\Hm)$,
where $\Hm$ is the 1-dimen\-sio\-nal Hausdorff measure on $X$ (or on $Z$). 

The diameter of $Z$ is denoted by $d(Z)$ and $B(x,r)$ stands for the closed ball in $X$ with center $x\in X$ and radius $r>0$.
If $W \subset U \times V$ and $u \in U$, where $U$ and $V$ are any sets, we write $W_u = \{\, v \in V\, :\, (u,v) \in W\, \}$.
For $U_0 \subset U$, a measure $\m$ on $U$ and a function $f:U_0 \to \overline{\R}$ we use the notation
\[ \dashint_{U_0} f\, d\m = \frac{1}{\m(U_0)}\int_{U_0} f\, d\m \]
if the right-hand side is defined.
We say that a metric space $X$ is \emph{rectifiable} if there is $E\subset\R$ and a Lipschitz function $f:E\to X$ such that
$\Hm\left(X\backslash f(E)\right) = 0$.

In this paper we will prove the following theorem.

\begin{theorem}
\label{thm1}
  If $X$ is a metric space with $c^2(X) < \infty$ then $X$ is rectifiable.
\end{theorem}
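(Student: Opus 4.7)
The plan is to prove the theorem by combining a quantitative local ``big pieces of Lipschitz curves'' statement with a Vitali-type exhaustion. First I would reduce to the case $\Hm(X) < \infty$ by a standard decomposition, and then aim to cover $\Hm$-almost all of $X$ by countably many Lipschitz images of subsets of $\R$; a concatenation (placing the parameter sets far apart in $\R$ on each bounded piece) yields the single Lipschitz parametrisation demanded by the paper's definition of rectifiability.

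The core would be the following local statement: there exist absolute constants $\eta, \delta > 0$ such that whenever a Borel set $E \subset X$ satisfies
\[ c^2(E) \leq \eta\, d(E) \quad \text{and} \quad \Hm(E) \geq \delta\, d(E), \]
a definite $\Hm$-fraction of $E$ lies in a single Lipschitz image of a subset of $\R$. This is the metric analogue of Jones's travelling salesman theorem, with Menger curvature playing the role of the $\beta$-numbers. To prove it I would pick $x_0, x_1 \in E$ realising (approximately) the diameter and use Chebyshev to conclude that for most $z \in E$ the curvature $c(x_0,z,x_1)$ is small. The key geometric input is the purely metric inequality that a small $c(u,v,w)$ forces the excess $d(u,v)+d(v,w)-d(u,w)$ to be small provided $v$ lies roughly between $u$ and $w$ in the metric sense. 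Ordering the ``good'' points of $E$ by their distance to $x_0$ then produces a sequence whose consecutive-gap sum is controlled by $d(x_0,x_1)$ plus a curvature-driven error, and the arc-length style parametrisation on this sequence extends to a Lipschitz map from an interval into $X$.

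To globalise, Fubini together with $c^2(X) < \infty$ gives that for $\Hm$-a.e.\ $x \in X$,
\[ \int_X\int_X c(x,y,z)^2\, d\Hm(y)\, d\Hm(z) < \infty, \]
so the averaged Menger curvature over $B(x,r)$ tends to zero with $r$. Combined with the standard almost-everywhere positivity of the upper density of $\Hm$ on a set of finite measure, this produces, for $\Hm$-a.e.\ $x$, arbitrarily small radii $r$ at which both the curvature is small relative to $r$ and $\Hm(B(x,r))$ is comparable to $r$; at such scales the local statement applies. A Vitali-type exhaustion then covers $\Hm$-almost all of $X$ by countably many Lipschitz pieces.

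The main obstacle will be the local statement, specifically the construction of the ordering and the verification of the Lipschitz property of the resulting parametrisation without access to Euclidean projections onto an approximating line. One has to control an accumulated metric defect along very many triangles using only the $L^2$-summability of their Menger curvatures, and to do so uniformly in the number of points, which calls for a delicate stopping-time or multiresolution argument intrinsic to the metric space, rather than the Fourier or projection methods available in $\R^n$.
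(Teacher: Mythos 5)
Your architecture matches the paper's: reduce to $\Hm$-finite (or $\sigma$-finite) pieces, prove a local ``big pieces of Lipschitz curves'' estimate under small normalised Menger curvature and two-sided density bounds, then exhaust. The paper's Proposition~\ref{prop} is precisely your local statement (note you also need an \emph{upper} density bound, the hypothesis~(ii) there, which your sketch omits); Lemma~\ref{le0} plays the role of your density-and-curvature selection lemma; and Lemma~\ref{lesigma} handles the reduction to $\sigma$-finiteness of $\Hm$, which is less ``standard'' than you suggest since $c^2(X)<\infty$ does not obviously give it. The globalisation in the paper is a contradiction argument on a purely unrectifiable subset rather than a Vitali exhaustion, but this is morally the same, and your remark about concatenating rescaled parameter sets to get a single Lipschitz map is fine.

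The genuine gap is in your sketch of the local statement, which is where essentially all of the paper's work lives. Picking $x_0,x_1$ near the diameter, applying Chebyshev to conclude $c(x_0,z,x_1)$ is small for most $z$, and ordering the good $z$'s by distance to $x_0$ does \emph{not} bound the consecutive-gap sum by $d(E)$ plus a small error. Already in the plane, if all good points lie within $\epsilon$ of the segment $[x_0,x_1]$, a set of $N$ such points alternating on either side of the segment yields a consecutive-gap sum of order $d(x_0,x_1)+N\epsilon$, which is unbounded as $N\to\infty$ for any fixed $\epsilon>0$. Getting a bound uniform in the number of points is exactly why a genuine multiresolution construction is required, and the single-pair Chebyshev argument cannot produce it. The paper builds nested nets $X_n$ at dyadic scales $2^{-n}$, turns them into polygonal curves $\Gamma_n$, freezes low-density balls via the stopping-time sets $H_n$, and bounds each length increment $l(\Gamma_n^{k+1})-l(\Gamma_n^k)$ by curvature integrals over carefully chosen triples in annuli, so that summing over all scales and all net points is controlled by $c^2_K(X,\m)$. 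It also needs the purely metric ordering lemmas (Lemma~\ref{lef}, Lemma~\ref{lemove}, Lemma~\ref{leordom}) to make ``ordering the points'' meaningful without Euclidean projections. Your closing paragraph correctly identifies this as ``the main obstacle,'' but as written the proposal leaves it unresolved, so the argument does not close.
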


It was already known that any Borel set $X\subset\R^n$ with $\Hm(X)<\infty$ and $c^2(X)<\infty$
is rectifiable.
This was first proved by David in an unpublished paper.
In \cite{MR1709304} L\'eger gave a different proof.
Further a very different proof in the case $n=2$ has been given by Tolsa in \cite{MR2164419}.
The proof of Theorem~\ref{thm1} given here follows the ideas of David's proof.
As a matter of fact, the basic idea and some parts of our proof are taken quite directly from it.
This result was a part of the argument,
when David proved in \cite{MR1654535} that a purely unrectifiable set in $\C$ with finite length measure is
removable for bounded analytic functions.
Under the additional assumption that the set is 1-Ahlfors-regular this was already proved by
Mattila, Melnikov and Verdera in \cite{MR1405945}. Also they used the curvature by showing that $E\subset\C$ is
contained in an Ahlfors-regular curve if there is $C<\infty$ such that
$c^2(E \cap D) \leq Cd(D)$ for every disc $D$ in $\C$.
In \cite{MR2297880} we showed that for a bounded 1-Ahlfors-regular metric space $X$
the condition $c^2_K(X) < \infty$, where $K$ is a universal constant large enough,
implies that $X$ is a Lipschitz image of a bounded subset of $\R$.
More precisely, in this case one can find
$E\subset [0,1]$ and a Lipschitz surjection $f:E\to X$ with Lipschitz constant less than
$C(c^2_K(X) + d(X))$, where the constant
$C$ depends only on the 1-Ahlfors-regurality constant of $X$.
Recall that the 1-Ahlfors-regularity of $X$ means that there exists a constant $C<\infty$
such that
$C^{-1}r \leq \Hm(B(x,r)) \leq Cr$
whenever $x\in X$ and $r\in ]0,d(X)]$.


Most of this article will be spent on proving the following proposition.

\begin{proposition}
\label{prop}
  For any positive numbers $\mu_0$, $C_0$ and $\tau_0$ there exist $K<\infty$ and $\varepsilon_0>0$
  such that if $X$ is a separable metric space and $\mu$ is a Borel measure on $X$ verifying
  \begin{itemize}
    \item[\rm(i)] $\m(X) \geq \mu_0d(X)$,
    \item[\rm(ii)] $\m(B(x,r)) \leq C_0r$ for any $x\in X$ and $r>0$,
    \item[\rm(iii)] $c^2_K(X,\m) \leq \varepsilon_0d(X)$,
  \end{itemize}
  then there is $E\subset [0,1]$ and a Lipschitz function $f:E\to X$ such that the
  Lipschitz constant of $f$ is at most $(1+\tau_0)d(X)$ and
  $\m\left(X \backslash f(E)\right) \leq \tau_0d(X)$.
\end{proposition}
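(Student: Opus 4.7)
The plan is to follow David's strategy, as the paper indicates, and to parameterize $X$ in measure by the distance function from a carefully chosen basepoint. First I would use hypotheses~(i) and~(ii) to locate two points $a,b \in X$ with $d(a,b)$ comparable to $d(X)$: since $\m(X) \geq \mu_0 d(X)$ but $\m(B(x,r)) \leq C_0 r$, no ball of small radius can carry all the mass, and an iterated pigeonhole then yields such a pair. With $a$ fixed, the candidate parameter is $\pi(x) = d(a,x)$, which is automatically $1$-Lipschitz by the triangle inequality, and the goal becomes to produce a Borel set $X_0 \subset X$ with $\m(X \setminus X_0) \leq \tau_0 d(X)$ on which
\[ d(x,y) \leq (1+\tau_0)\,|\pi(x) - \pi(y)| \quad \text{for all } x,y \in X_0. \]
On such an $X_0$, the map $\pi$ is injective, so $f := \pi^{-1}$, after rescaling the domain to a subset $E \subset [0,1]$, is Lipschitz with constant at most $(1+\tau_0)d(X)$, which delivers the conclusion.

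The geometric content of the displayed inequality is that for each pair $x,y \in X_0$ the triple $(a,y,x)$ is nearly collinear with $y$ between $a$ and $x$. Since $2\sin\sphericalangle z_1z_2z_3 = c(z_1,z_2,z_3)\,d(z_1,z_3)$, an elementary expansion of the cosine formula shows that the defect $d(x,y) - |\pi(x) - \pi(y)|$ is controlled by a quantity of the form $c(a,y,x)^2 \cdot d(a,x)\min(d(a,y),d(y,x))$. Consequently the required near-isometry on $X_0$ reduces to a uniform pointwise bound of the shape $c(a,y,x) \leq \delta$, with $\delta = \delta(\tau_0)$, for all $x,y \in X_0$. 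To produce such a bound, I would first apply Fubini to hypothesis~(iii) to select $a$ (inside a large subset of $X$) for which $\int_{X^2} c(a,y,x)^2\,\chi_{\TK(X)}\,d\m^2$ is small, then use Chebyshev together with (ii) to excise those $x$'s whose integral $\int_X c(a,y,x)^2\,\chi_{\TK(X)}\,d\m(y)$ is large, and finally run a stopping-time selection to pass from this integrated smallness to the desired uniform pointwise bound on a slightly smaller good set $X_0$. The parameter $K$ enters here: it must be chosen large enough that all the triples entering the estimate lie inside $\TK(X)$, which, given the lower bound on $d(a,b)$ and the upper regularity~(ii), translates into a quantitative constraint depending only on $\mu_0$, $C_0$ and $\tau_0$.

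The main obstacle, I expect, is precisely this last step: upgrading integrated curvature smallness to uniform pointwise smallness while simultaneously keeping the discarded measure below $\tau_0 d(X)$ and the final Lipschitz constant as tight as $(1+\tau_0)$ rather than merely some larger $C(\tau_0)$. Achieving the sharp constant requires that $\varepsilon_0$ and $K$ be tuned delicately as functions of $\tau_0$, $\mu_0$, $C_0$, and an iterative excision scheme in which each stage only discards mass of order $\tau_0 d(X)$ divided by the total number of stages. A secondary difficulty is that the entire argument must be carried out purely intrinsically — without projections onto tangent lines or any ambient linear structure — so every geometric estimate, including the collinearity-to-distance translation above, must be phrased only through the three-point distance relations controlled by $c$.
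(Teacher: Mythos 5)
Your proposal takes a genuinely different route from the paper, and it has a structural flaw that I do not believe can be repaired.

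The central idea—use $\pi(x)=d(a,x)$ as the parameter and show it is nearly isometric on a large subset $X_0$—founders on the fact that $\pi$ is not even injective unless $a$ sits at an ``end'' of the set. If $X$ is (close to) a line segment and $a$ lies near its midpoint, then $\pi$ is essentially two-to-one, and no near-isometric inverse can exist on any set of measure close to $\m(X)$, regardless of how small the curvature is. Your Fubini/Chebyshev selection picks $a$ as a \emph{typical} point with small slice curvature, which is precisely the wrong kind of point: typical points are interior. You would first need a structural lemma asserting that $X$ admits an almost-linear order and that $a$ can be taken extremal in that order while also controlling its slice curvature, and this is exactly the kind of nontrivial geometric fact the whole proof is supposed to establish, not assume. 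Relatedly, your ``cosine-formula expansion'' controls the excess $d(a,y)+d(y,x)-d(a,x)$ only when the near-collinearity of $(a,y,x)$ places $y$ between $a$ and $x$; when instead $a$ is the middle point of the near-collinear triple (the case you must exclude), $c(a,y,x)$ is still small but $d(x,y)-|\pi(x)-\pi(y)|\approx 2\min\{d(a,x),d(a,y)\}$ is large, so the claimed bound simply does not hold. The ordering information is not encoded in the size of $c$ alone.

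The remaining gap you flag yourself—upgrading integrated curvature smallness to a uniform pointwise bound while discarding little mass and keeping the constant at $(1+\tau_0)$—is also a genuine one, and the paper sidesteps it rather than solving it. The paper's proof (after a Kuratowski embedding into a Banach space) builds a sequence of polygonal curves $\Gamma_n$ from carefully chosen $2^{-n}$-nets $D_n$, together with a ``stopped'' family $H_n$ where the local density drops. Lemmas~\ref{lef}--\ref{led} and \ref{leordom} provide an intrinsic linear order on nearby net points whenever the local Menger curvature is small, so each $\Gamma_n$ is a well-defined broken line. The length increment from $\Gamma_{n}^k$ to $\Gamma_n^{k+1}$ is then bounded, triple by triple, by a local piece of the curvature integral (as in~\eqref{intest1}--\eqref{intest2}), and summing over all scales and vertices gives $l(\Gamma_m^0)\le(1+\tau_0)d(X)$. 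The limit curve is obtained by Arzel\`a--Ascoli, and the final section shows $\m(X\setminus\Gamma)\le\tau_0 d(X)$ by a separate covering/Chebyshev argument. In other words, the curvature is used \emph{in integrated form} to pay for the length of a constructed curve, not upgraded to a pointwise near-collinearity statement relative to a single basepoint. Switching to that framework is what makes the sharp $(1+\tau_0)$ constant and the small excluded measure achievable.
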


For any $\phi \in [0,1]$ we denote by $\om{\phi}$ the set of the metric
spaces $X$ for which $d(x,z)\geq d(x,y) + \phi d(y,z)$ whenever $x,y,z\in X$ are such that
$\ordd{x}{y}{z}$.
Notice that $\{x,y,z\} \in \om{\phi}$ whenever $\cos\sphericalangle xyz \leq -\phi \leq 0$.
We say that a metric space $X$ \emph{is orderable}, if there is an injection $o:X\to\R$ such that
for all $x,y,z\in X$ the condition $o(x)<o(y)<o(z)$ implies $d(x,z) > \max\{d(x,y),d(y,z)\}$.
In that case the function $o$ is called an \emph{order}.
If there is an order $o$ on $\{x_1,\dotsc,x_m\}$, $m\in\N$, such that
$o(x_i)<o(x_{i+1})$ for all $i\in\{1,\dotsc,m-1\}$, we write shortly $x_1x_2\dotsc x_m$.
We also denote $\oom{\phi} = \{\, X \in \om{\phi}\, :\, \text{$X$ is orderable}\, \}$.
The proof of the next lemma
can be found in \cite{MR2163108}.

\begin{lemma}
\label{lef}
  For any $L\geq 1$ there is $\phi < 1$ such that if $Z \in \om{\phi}$
  and $d(x,y) < Ld(z,w)$ for all $x,y,z,w\in Z$, $z\neq w$,
  then $Z$ is orderable or
  $Z = \{v_1,v_2,v_3,v_4\}$ with $v_1v_2v_3$, $v_2v_3v_4$, $v_3v_4v_1$ and $v_4v_1v_2$.
\end{lemma}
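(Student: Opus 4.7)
The plan is to choose anchor points $a, b \in Z$ with $d(a,b)$ nearly equal to the diameter $d(Z)$, and to define an order on $Z$ essentially by $o(z) = d(a,z)$, with $d(z,b)$ as a secondary key if ties occur. The starting observation is that for every $z \in Z$ the triple $\{a,z,b\}$ has its diameter at $(a,b)$, so $Z \in \om{\phi}$ gives
\[ d(a,b) \;\geq\; d(a,z) + \phi\, d(z,b). \]
Combined with the triangle inequality and the symmetric version obtained by swapping $a$ and $b$, this pins every $z$ close to the ``segment'' from $a$ to $b$: one obtains $d(a,z) + d(z,b) \leq 2d(a,b)/(1+\phi)$, with slack tending to $0$ as $\phi \to 1$.

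Next I would verify that $o$ is an order. Suppose $o(x) < o(y) < o(z)$; the triangle inequality gives $d(x,z) \geq d(a,z) - d(a,x)$, so the task is to upper bound $d(x,y)$ and $d(y,z)$. In the triple $\{a,x,y\}$ the diameter is either $(a,y)$, in which case $\om{\phi}$ yields $d(x,y) \leq (d(a,y) - d(a,x))/\phi$, or $(x,y)$, in which case $d(x,y) \geq d(a,x) + \phi\, d(a,y)$, a value essentially as large as $d(a,b)$. A symmetric analysis of $\{b, y, z\}$ with the roles of $a$ and $b$ swapped controls $d(y,z)$. When the first alternative holds for every relevant triple, summing the upper bounds and comparing with the lower bound on $d(x,z)$ gives $d(x,z) > \max\{d(x,y), d(y,z)\}$ as soon as $\phi$ is chosen close enough to $1$; the aspect-ratio bound $d(x,y) < L d(z,w)$ is what makes a threshold $\phi(L) < 1$ available.

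If instead the second alternative ever occurs, there is a second pair $(a', b')$ whose distance is nearly $d(a,b)$ and that serves as the diameter of a triple containing one of $a, b$. Applying $\om{\phi}$ to every triple inside $\{a, b, a', b'\}$ forces the four pairwise ``betweennesses'' to close up into the cyclic pattern $v_1 v_2 v_3$, $v_2 v_3 v_4$, $v_3 v_4 v_1$, $v_4 v_1 v_2$ after a suitable relabelling. It remains to rule out additional points: any fifth $w \in Z$ would, by the first step applied to both long pairs, have to sit near the ``segment'' of each simultaneously, and applying $\om{\phi}$ to triples of the form $\{w, v_i, v_{i+2}\}$ produces a contradiction once $\phi$ is close enough to $1$ in terms of $L$.

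The main obstacle will be the last step, namely showing that the exceptional configuration is exactly the four-point cycle stated, with all four listed orderings actually realized, and not some variant or a larger set. The metric estimates of the first two paragraphs are essentially forced by $\om{\phi}$ and the triangle inequality; the real work is the combinatorial bookkeeping that tracks which pair realizes the diameter of each of the many triples involved, and the use of the aspect-ratio bound $L$ to convert the quantitative slack controlled by $\phi$ into true rigidity of the four-point configuration.
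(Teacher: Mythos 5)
The paper does not prove Lemma~\ref{lef}; it delegates to \cite{MR2163108}, so there is no in-paper argument to compare against, and I can only assess your sketch on its own merits. The anchor strategy is the right idea and the first half essentially goes through. With $(a,b)$ a diametral pair and $o(z)=d(a,z)$, assume the max pair of every triple $\{a,u,v\}$ contains $a$. For $o(x)<o(y)<o(z)$, set $\Delta_1=d(a,y)-d(a,x)$ and $\Delta_2=d(a,z)-d(a,y)$; then $d(x,z)\geq\Delta_1+\Delta_2$ by the triangle inequality, $d(x,y)\leq\Delta_1/\phi$ and $d(y,z)\leq\Delta_2/\phi$ from $\om{\phi}$, and $\Delta_i\geq\phi\, d(x,y)\geq\phi\, d(Z)/L$ while $\Delta_i\leq d(Z)$, so $d(x,z)>\max\{d(x,y),d(y,z)\}$ as soon as $\phi^2>L(1-\phi)$. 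The tie-breaker $d(\cdot,b)$ is actually never invoked: if $d(a,x)=d(a,y)$ with $x\neq y$, neither $(a,x)$ nor $(a,y)$ can be the max pair of $\{a,x,y\}$ (in $\om{\phi}$ with $\phi>0$ the max pair of a nondegenerate triple is strict), so one is already in the exceptional branch.

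The real gaps are in the exceptional branch, and they are more than bookkeeping. First, from $(x,y)$ being the max pair of $\{a,x,y\}$ you only get $d(x,y)\geq(1+\phi)\min\{d(a,x),d(a,y)\}\geq(1+\phi)d(Z)/L$, which for large $L$ is nowhere near $d(a,b)$; ``a value essentially as large as $d(a,b)$'' is not yet justified. To get $d(x,y)\approx d(a,b)$ one must first show that $(x,y)$ is also the max pair of $\{b,x,y\}$. This is true for $\phi$ close to $1$ — the alternative, say $(b,y)$ max, chained with $\om{\phi}$ on $\{a,b,x\}$, $\{a,b,y\}$, $\{a,x,y\}$ gives $d(a,y)\leq\frac{1-\phi}{1+\phi}d(a,b)$, contradicting $d(a,y)\geq d(Z)/L$ once $\phi>\frac{L-1}{L+1}$ — but it is a step you skip. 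Second, and more seriously, ruling out a fifth point $w$ using only the diagonal triples $\{w,a,b\}$ and $\{w,x,y\}$ cannot work: those two applications of $\om{\phi}$ merely pin $d(a,w)+d(b,w)$ and $d(x,w)+d(y,w)$ near the two diagonal lengths, constraints a fifth point can easily satisfy. The contradiction must come from side triples such as $\{a,x,w\}$ and $\{b,x,w\}$, via a case analysis on which pair realizes each max; that case analysis is the actual content of the last step and the sketch does not carry it out.
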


The following very simple lemma (see \cite{MR2297880}) will also be used later.

\begin{lemma}
\label{lemove}
  Let $\{x,y,z,z_1\}$ be a metric space such that $\{x,y,z\},\{x,y,z_1\}\in \om{\phi}$.
  \begin{itemize}
    \item[\rm(i)] If $xyz$ and
    $d(z,z_1) < \phi\min\{ d(x,y),d(y,z)+d(y,z_1) \}$, then $xyz_1$.
    \item[\rm(ii)]  If $xzy$ and
    $d(z,z_1) < \phi\min\{ d(z,x)+d(z_1,x), d(z,y)+d(z_1,y) \}$, then $xz_1y$.
  \end{itemize}
\end{lemma}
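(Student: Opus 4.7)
My plan is to reduce each part to an observation about which pair of points in $\{x,y,z_1\}$ is the diameter of that three-point metric space. We may assume $\phi>0$ (else the hypothesis $d(z,z_1)<\phi\cdot(\text{something})$ is vacuous) and that the four points are distinct (else the statement is trivial). Since $\{x,y,z_1\}\in\om{\phi}$, the defining inequality forces the diameter distance to strictly exceed the other two; in particular $\{x,y,z_1\}$ is orderable, the middle point being the one not in the diameter pair. Thus the conclusion $xyz_1$ in (i) amounts to showing that $\{x,z_1\}$ is the diameter, and the conclusion $xz_1y$ in (ii) amounts to showing that $\{x,y\}$ is. In each part I proceed by contradiction, ruling out both ``wrong'' diameter pairs.

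For (i), suppose first that $\{x,y\}$ is the diameter of $\{x,y,z_1\}$. Then $\om{\phi}$ applied with $z_1$ as middle point gives $d(x,y)\geq d(x,z_1)+\phi d(y,z_1)$. Chaining with the triangle inequality $d(x,z_1)\geq d(x,z)-d(z,z_1)$ and the $\om{\phi}$ inequality $d(x,z)\geq d(x,y)+\phi d(y,z)$ coming from $xyz$ yields $d(z,z_1)\geq\phi(d(y,z)+d(y,z_1))$, contradicting the second part of the hypothesis. Suppose instead that $\{y,z_1\}$ is the diameter. Then $\om{\phi}$ gives $d(y,z_1)\geq d(x,z_1)+\phi d(x,y)$, while two triangle inequalities combined with the complementary $\om{\phi}$ inequality $d(x,z)\geq d(y,z)+\phi d(x,y)$ from $xyz$ yield $d(y,z_1)-d(x,z_1)\leq 2d(z,z_1)-\phi d(x,y)$. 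Chaining produces $d(z,z_1)\geq\phi d(x,y)$, contradicting the first part of the hypothesis.

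Part (ii) follows the same template, ruling out $\{x,z_1\}$ and $\{y,z_1\}$ as candidates for the diameter of $\{x,y,z_1\}$. The two subcases are related by the $x\leftrightarrow y$ symmetry of both hypothesis and conclusion. For instance, if $\{x,z_1\}$ were the diameter, then $\om{\phi}$ gives $d(x,z_1)\geq d(x,y)+\phi d(y,z_1)$; combining with $d(x,z_1)\leq d(x,z)+d(z,z_1)$ and $d(x,y)\geq d(x,z)+\phi d(y,z)$ (from $xzy$) produces $d(z,z_1)\geq\phi(d(y,z)+d(y,z_1))$, contradicting the $d(z,y)+d(z_1,y)$ part of the hypothesis; ruling out $\{y,z_1\}$ uses the same argument with $x$ and $y$ swapped and the $d(z,x)+d(z_1,x)$ part of the hypothesis.

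The main obstacle is the second subcase of part (i). A direct estimate of $d(x,z_1)-d(y,z_1)$ using only the triangle inequality and $\om{\phi}$ on $\{x,y,z\}$ loses a factor of two, giving $d(x,z_1)-d(y,z_1)\geq\phi d(x,y)-2d(z,z_1)$, which would need $d(z,z_1)<\tfrac12\phi d(x,y)$ rather than the assumed $d(z,z_1)<\phi d(x,y)$ to close. The trick is to also use the $\om{\phi}$ inequality on $\{x,y,z_1\}$ assumed (for contradiction) in this subcase, which supplies the matching lower bound $d(y,z_1)-d(x,z_1)\geq\phi d(x,y)$ and recovers the factor lost to the triangle inequality.
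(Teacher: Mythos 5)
The paper does not prove this lemma itself; it simply cites \cite{MR2297880} and calls it ``very simple.'' So there is no in-paper argument to compare against, and the proposal must be judged on its own terms. Your proof is correct. The preliminary observations are right: in $\om{\phi}$ with $\phi>0$ and distinct points, applying the defining inequality to whichever pair realizes the maximum forces that maximum to strictly exceed both other distances, so a three-point subset has a unique strict diameter pair and is orderable with the remaining point in the middle; hence $xyz_1$ is equivalent to $\{x,z_1\}$ being the diameter of $\{x,y,z_1\}$, and $xz_1y$ to $\{x,y\}$ being the diameter. The case analyses also check out. In part~(i), ruling out $\{x,y\}$ as diameter uses $d(x,y)\geq d(x,z_1)+\phi d(y,z_1)$, the triangle inequality $d(x,z_1)\geq d(x,z)-d(z,z_1)$, and $d(x,z)\geq d(x,y)+\phi d(y,z)$ to get $d(z,z_1)\geq\phi(d(y,z)+d(y,z_1))$. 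Ruling out $\{y,z_1\}$ is the step you correctly flag as delicate: the one-sided bound $d(y,z_1)-d(x,z_1)\leq 2d(z,z_1)-\phi d(x,y)$ (from two triangle inequalities plus $d(x,z)\geq d(y,z)+\phi d(x,y)$) must be played against the matching lower bound $d(y,z_1)-d(x,z_1)\geq\phi d(x,y)$ supplied by $\om{\phi}$ on $\{x,y,z_1\}$ under the contradiction hypothesis, yielding $2d(z,z_1)\geq 2\phi d(x,y)$; a naive estimate using only the $\{x,y,z\}$ side would indeed lose a factor of two. Part~(ii) is handled correctly by the same scheme, and the $x\leftrightarrow y$ symmetry observation legitimately disposes of the second subcase. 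This is essentially the only natural elementary route to the lemma, and I see no gaps.
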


The next lemma is very useful in the proof of Proposition~\ref{prop}.

\begin{lemma}
\label{led}
  For any $\eta>0$ there are positive numbers $\eta_1$ and $\eta_2$ such that the following is true:
  Let $X$ be a metric space, $\m$ a Borel measure on $X$, $\delta \in [0,\infty[$and $r \geq d(X)$. If $\m(X)\geq \delta r$ and $c^2_5(X,\m) \leq \eta_1\delta^3 r$,
  then $\m(B(x,\eta r)) \geq \eta_2 \m(X)$ for some $x \in X$.
\end{lemma}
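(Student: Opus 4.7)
The plan is to argue by contradiction: I assume $\m(B(x,\eta r)) < \eta_2\m(X)$ for every $x\in X$ and try to build a subset of $\mathcal{T}_5(X)$ of $\m^3$-measure at least $c(\eta)\delta^3 r^3$ on which $c(z_1,z_2,z_3) \gtrsim 1/r$. This will yield
\[
c^2_5(X,\m) \gtrsim c(\eta)\,\delta^3 r,
\]
contradicting (iii) as soon as $\eta_1$ is chosen small enough in terms of $\eta$.

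First I would select a scale $s$ by a dyadic pigeonhole on pair-distances. Because $(\m\otimes\m)\{d(z_1,z_3)>\eta r\}\geq(1-\eta_2)\m(X)^2$ under the contradiction hypothesis, partitioning $[\eta r, r]$ into $O(\log(1/\eta))$ dyadic windows produces some $s\in[\eta r,r]$ with $(\m\otimes\m)\{d(z_1,z_3)\in[s,2s]\}\gtrsim \m(X)^2/\log(1/\eta)$, and by Fubini a $\m$-positive set of base points $z_1$ whose $z_3$-slices at scale $s$ carry mass $\gtrsim \m(X)/\log(1/\eta)$.

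Next, for each such pair $(z_1,z_3)$ I would adjoin a third point $z_2$ in the middle region
\[
M(z_1,z_3) = B(z_1,10s)\cap B(z_3,10s)\setminus\bigl(B(z_1,s/5)\cup B(z_3,s/5)\bigr),
\]
which automatically places the triple in $\mathcal{T}_5(X)$, and restrict further to the nondegenerate subset
\[
\bigl\{z_2\in M(z_1,z_3): \min\bigl(d(z_1,z_2)+d(z_2,z_3)-d(z_1,z_3),\, d(z_1,z_3)-|d(z_1,z_2)-d(z_2,z_3)|\bigr)\geq \epsilon s\bigr\}
\]
for a small absolute $\epsilon$. For such $z_2$ the law of cosines forces $|\cos\sphericalangle z_1 z_2 z_3|\leq 1-c_0\epsilon$, hence $c(z_1,z_2,z_3) \gtrsim \sqrt{\epsilon}/s \gtrsim 1/r$, giving the pointwise curvature lower bound.

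The main obstacle is to bound below the $\m$-measure of this nondegenerate set. Its complement, a thin \emph{degenerate spindle} around $z_1$ and $z_3$, can carry substantial $\m$-mass in a general metric space since it need not be of lower dimension. The argument must cover the spindle by a number of balls of radius $\eta r$ depending only on $\eta$ and $\epsilon$, and then apply the non-concentration hypothesis to each; this is where the quantitative relation between $\eta_2$ and $\eta$ is pinned down (and it may force restricting attention to scales $s$ comparable to $\eta r$ via an additional pigeonhole). Once the nondegenerate mass is controlled, combining the three estimates with $\m(X)\geq\delta r$ produces $c^2_5(X,\m) \gtrsim \delta^3 r/\log(1/\eta)^c$, closing the contradiction.
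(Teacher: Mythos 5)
Your plan and the paper's proof diverge fundamentally. You argue by contradiction and try to exhibit a large mass of curvature-bearing triples from a pigeonholed scale; the paper instead works directly: by averaging it picks a center $u_1$ with small conditional curvature, decomposes $X$ into thin annuli $A_k = B(u_1,r\lambda^{k-1})\setminus B(u_1,r\lambda^k)$ with $\lambda$ close to $1$, selects two further good points $u_2,u_3\in A_k$ by averaging, and shows that the ``flat'' part of $A_k$ (the set of $w$ with $\{w,u_1,u_2,u_3\}\in\om{\phi}$) must lie in $B(u_2,ar)\cup B(u_3,ar)$ because three distinct points of a thin annulus cannot all be $\om{\phi}$-ordered away from the center $u_1$; the non-flat remainder of $A_k$ is then bounded by Chebyshev against the curvature. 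Iterating gives a ball of radius $\eta r$ with a fixed fraction of the mass.

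The gap in your sketch is exactly the step you flag as the ``main obstacle,'' and I do not see how to fill it as you propose. First, in a general metric space the degenerate set $\{z_2 : \partial(z_1,z_2,z_3)<\epsilon s\}$ is not a thin spindle and need not be coverable by a number of $\eta r$-balls depending only on $\eta$ and $\epsilon$: already in $\ell^\infty$ the set of exact ``midpoints'' between two points at distance $s$ can be a cube of diameter comparable to $s$, so the covering number depends on the ambient geometry, not on $\eta,\epsilon$ alone. Second, your fallback of restricting to $s\sim\eta r$ is self-defeating: at that scale the entire middle region $M(z_1,z_3)$ has diameter comparable to $\eta r$, so the non-concentration hypothesis only gives an \emph{upper} bound $\m(M)<\eta_2\m(X)$, while you need a \emph{lower} bound on the mass of the nondegenerate part of $M$; nothing in the hypotheses ensures $M(z_1,z_3)$ carries any mass at all. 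The paper sidesteps both problems simultaneously via the annulus: the annulus itself is the ``middle'' region, its mass is controlled by the assumption that $B(u_1,ar)$ is small, and the crucial observation is that thinness of the annulus, combined with the $\om{\phi}$-order structure, forces the flat points to cluster near at most two locations — a genuinely geometric fact about annuli that a naive covering argument cannot reproduce. Without some analogue of that idea, the contradiction argument does not close.
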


\begin{proof}
Fix $K=5$ and let $\phi \in ]3/5,1[$ be some fixed constant. We assume that $r\m(X) \in ]0,\infty[$.
The case $\m(X) = \infty$ can be treated similarly.
Choose $u_1\in X$ such that
\[ \m(X)\int_{\TK(X)_{u_1}} c(u_1,y,z)^2\, d\m^2(y,z) \leq c^2_K(X,\m), \]
and set $A_k = B(u_1,r\lambda^{k-1}) \backslash B(u_1,r\lambda^k)$ for $k\in\N$, where
$\lambda \in ](2\phi)^{-1},(K-2)(K\phi)^{-1}]$. 
Further let $k_0$ be the smallest integer such that $\lambda^{k_0} \leq a$,
where $a = 1 - \phi\lambda \in [2/K,1/2[$.
We shall show that there exists $Z\subset X$ such that $d(Z) \leq 2ar$ and $\m(Z) \geq (3k_0 + 1)^{-1}\m(X)$.
The desired result follows easily from this.

Let us denote $b=(3k_0 + 1)^{-1}$ and assume that $\m(B(u_1,ar)) < b\m(X)$. Then $\m(A_k) \geq 3b\m(X)$ for some $k \in \{1,2,\dotsc,k_0\}$.
We now choose $u_2\in A_k$ such that
\begin{gather}
  \m(X)\m(A_k)\int_{\TK(X)_{(u_1,u_2)}} c(u_1,u_2,z)^2\, d\m z \leq 2c^2_K(X,\m),  \label{b1}  \\
  \m(A_k)\int_{\TK(X)_{u_2}} c(x,u_2,z)^2\, d\m x\, d\m z \leq 2c^2_K(X,\m).  \notag
\end{gather}
We can assume that $\m(A_k\backslash B(u_2,ar)) \geq 2b\m(X)$. Since $Ka \geq 2$, we can choose $u_3\in A_k\backslash B(u_2,ar)$ such that
\begin{gather}
  \m(X)\m(A_k\backslash B(u_2,ar))\int_{\TK(X)_{(u_1,u_3)}} c(u_1,y,u_3)^2\, d\m y \leq 3c^2_K(X,\m),  \label{c1}  \\
  \m(A_k)\m(A_k\backslash B(u_2,ar))\int_{\TK(X)_{(u_2,u_3)}} c(x,u_2,u_3)^2\, d\m x \leq 6c^2_K(X,\m),  \label{c2}  \\
  \m(X)\m(A_k)\m(A_k\backslash B(u_2,ar)) c(u_1,u_2,u_3)^2 \leq 6c^2_K(X,\m).  \label{c3}
\end{gather}
  
Denote $F = \{\, w \in A_k\, :\, \{w,u_1,u_2,u_3\} \in \om{\phi}\, \}$.
We next show that $F \subset B(u_2,ar) \cup B(u_3,ar)$. For this,
assume that $w_1,w_2,w_3 \in A_k$ are distinct points such that $\{w_1,w_2,w_3,u_1\} \in \om{\phi}$, and denote $d_i = d(w_i,u_1)$ and $d_{ij} = d(w_i,w_j)$
for $i,j \in \{1,2,3\}$. Now $\ordd{w_i}{w_j}{u_1}$ for some distinct $i,j \in \{1,2,3\}$, because else we would have, by assuming
$d_1 \leq d_2 \leq d_3$, that
$d_{pq} + \phi d_{qr} - d_{pr} \geq d_2 + \phi d_1 +\phi(d_3 + \phi d_1) - d_2 - d_3 = (\phi-1)d_3 + \phi(1+\phi)d_1 > (\phi-1+\phi(1+\phi)\lambda)r\lambda^{k-1} \geq 0$
for every distinct $p,q,r \in \{1,2,3\}$, which is a contradiction.
Further, if $\ordd{w_i}{w_j}{u_1}$ then $d_{ij} < (1 - \phi\lambda)r\lambda^{k-1} \leq ar$. Therefore, since $d(u_2,u_3) > ar$, we have
$F \subset B(u_2,ar) \cup B(u_3,ar)$.

Since $b^2r^2c(u_1,u_2,u_3)^2 \leq \eta_1$ by \eqref{c3}, we have $\{u_1,u_2,u_3\} \in \om{\sqrt{1-4^{-1}b^{-2}\eta_1}} \subset \om{\phi}$
by assuming $\eta_1 \leq 3b^2(1-\phi^2)$. Thus $A_k \backslash F \subset F_{12} \cup F_{13} \cup F_{23}$, where
$F_{ij} = \{\, w \in A_k\, :\, \{u_i,u_j,w\} \not\in \om{\phi}\, \}$.
Now by \eqref{b1}
\[ \m(F_{12}) \leq \int_{F_{12}} \frac{r^2c(u_1,u_2,w)^2}{4(1-\phi^2)}\, d\m w \leq \frac{\eta_1\delta r}{6b(1-\phi^2)} \]
and from \eqref{c1} and \eqref{c2} we similarly get
$ \m(F_{13}) \leq 3\eta_1\delta r(8b(1-\phi^2))^{-1}$ and $\m(F_{23}) \leq \eta_1\delta r(4b^2(1-\phi^2))^{-1}$.
Thus by taking $\eta_1 \leq b^3(1-\phi^2)$ we have
\[ \m(A_k \backslash (B(u_2,ar) \cup B(u_3,ar))) < \frac{\eta_1\delta r}{b^2(1-\phi^2)} \leq b\delta r \leq b\m(X) \]
and further $\max\{\, \m(B(u_2,ar)),\m(B(u_3,ar))\, \} \geq b\m(X)$.
\end{proof}

Denote
\[ \partial(z_1,z_2,z_3) = \min_{\sigma \in S_3} \left( d(z_{\sigma(1)},z_{\sigma(2)})+d(z_{\sigma(2)},z_{\sigma(3)})-d(z_{\sigma(1)},z_{\sigma(3)}) \right), \]
where $S_3$ is the set of permutations on $\{1,2,3\}$.
For Borel subset $Z \subset X$ we set
\[ \beta(Z) = \int_{Z^3} \frac{\partial(z_1,z_2,z_3)}{d(\{z_1,z_2,z_3\})^3}\, d(\Hm)^3(z_1,z_2,z_3). \]
One easily sees (\cite[Lemma 5.1]{MR2163108}) that for any $K \in [1,\infty[$
\begin{equation}
\label{cp}
  \frac{c^2_K(Z)}{4K^2} \leq \beta(Z) \leq \frac{c^2(Z)}{2}.
\end{equation}

\begin{lemma}
\label{lesigma}
  If $X$ is a metric space with $\beta(X) < \infty$ then
  the 1-dimensional Haus\-dorff measure on $X$ is $\sigma$-finite.
\end{lemma}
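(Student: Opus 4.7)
The plan is to first reduce to the case where $X$ is bounded, then argue by contradiction using Lemma~\ref{led} and the inequality \eqref{cp}. Fix any basepoint $x_0 \in X$. Since $X = \bigcup_{n \in \N} (X \cap B(x_0, n))$ and $\sigma$-finiteness of $\Hm$ on $X$ follows from $\sigma$-finiteness on each term of a countable cover, it suffices to treat each bounded set $X_n := X \cap B(x_0, n)$ separately. As $\beta$ is monotone in its argument, $\beta(X_n) \leq \beta(X) < \infty$, so I may assume $X$ itself is bounded with $D := d(X) < \infty$. By \eqref{cp} applied with $K = 5$, $c^2_5(Y, \Hm) \leq 100\, \beta(Y) \leq 100\, \beta(X) =: C < \infty$ for every Borel $Y \subset X$.

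Suppose for contradiction that $\Hm$ is not $\sigma$-finite on $X$; then $\Hm(X) = \infty$. Fix $\eta \in (0, 1/2)$ together with the corresponding $\eta_1, \eta_2$ of Lemma~\ref{led}. Set $\delta := (C/(\eta_1 D))^{1/3}$, so that $c^2_5(X, \Hm) \leq C = \eta_1 \delta^3 D$, while $\Hm(X) = \infty \geq \delta D$ is automatic. Lemma~\ref{led} with $r = D$ produces $x_1 \in X$ such that $Y_1 := X \cap B(x_1, \eta D)$ satisfies $\Hm(Y_1) \geq \eta_2 \Hm(X) = \infty$ and $d(Y_1) \leq 2\eta D$. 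Iterating this procedure yields a nested sequence $X \supset Y_1 \supset Y_2 \supset \cdots$ with $d(Y_k) \leq (2\eta)^k D \to 0$ and $\Hm(Y_k) = \infty$ for all $k$.

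The main obstacle is turning this iteration into a contradiction. The centres $x_k$ form a Cauchy sequence, so on passing to the completion we obtain a limit point $x^*$ with $\Hm(X \cap B(x^*, \varepsilon)) = \infty$ for every $\varepsilon > 0$. I expect to close the argument by showing that such extreme concentration at a single point is incompatible with $\beta(X) < \infty$: either by running Lemma~\ref{led} more carefully to extract a \emph{second} concentration point whose pairing with the first yields triples with $\partial$ bounded below and hence $\beta = \infty$, or by reorganising the construction as a Vitali-type covering/exhaustion at dyadic scales, directly producing countably many balls $B(x, r)$ with $\Hm(X \cap B(x, r)) \leq Mr$ that cover $X$ modulo an $\Hm$-null residue.
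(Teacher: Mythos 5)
Your reduction to the bounded case is correct, and the observation that a failure of $\sigma$-finiteness for a bounded space means $\Hm(X)=\infty$ is also fine. But from that point on you are attempting to prove the strictly stronger assertion that every bounded metric space with $\beta(X)<\infty$ has $\Hm(X)<\infty$, and that is not what Lemma~\ref{lesigma} claims. The lemma, and the way it is used in the proof of Theorem~\ref{thm2}, only require $\sigma$-finiteness; accordingly the paper \emph{decomposes} $X$ into countably many pieces of finite $\Hm$-measure rather than deriving a contradiction from $\Hm(X)=\infty$. Your iteration does produce nested balls of infinite $\Hm$-measure with diameters tending to $0$ and hence a concentration point $x^*$, but you rightly flag that you do not see how to turn this into a contradiction, and neither of your two suggestions does so. In the first, the two concentration points you would extract live inside balls of shrinking diameter, so no fixed lower bound on $\partial(z_1,z_2,z_3)$, nor on $\partial(z_1,z_2,z_3)/d(\{z_1,z_2,z_3\})^3$, is available. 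The second suggestion is essentially a restatement of what must be proved (a countable cover of $X$, modulo a null set, by sets of finite $\Hm$-measure) without the mechanism that produces such a cover.

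The paper's proof is a constructive decomposition whose key technical input is absent from your sketch. It reruns the annulus construction from the proof of Lemma~\ref{led} around a point $x_0$ chosen by averaging, but to a different end: for each $\lambda$ close to $1$ and each $k$, the annulus $B(x_0,\lambda^{k-1}d(X))\setminus B(x_0,\lambda^{k}d(X))$ is written as $F^1_{\lambda,k}\cup F^2_{\lambda,k}\cup F^3_{\lambda,k}$, where $F^1_{\lambda,k}$ and $F^2_{\lambda,k}$ are neighbourhoods of two further averaged points (the $u_2,u_3$ of that proof) of diameter at most $2(1-\lambda^2)\lambda^{k-1}d(X)$, and $F^3_{\lambda,k}$ is the part of the annulus on which the Menger curvature relative to the chosen pairs $(u_i,u_j)$ is bounded below. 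A Chebyshev estimate against the finite $\beta$-integral --- the analogue of the $F_{ij}$ estimates in the proof of Lemma~\ref{led} --- then gives $\Hm(F^3_{\lambda,k})<\infty$. Since $(1-\lambda^2)\sum_{k\geq1}\lambda^{k-1}=1+\lambda$, for $\lambda$ near $1$ the sets $F^1_{\lambda,k},F^2_{\lambda,k}$ provide a cover of $X\setminus\bigl(\{x_0\}\cup\bigcup_{j,k}F^3_{\lambda_j,k}\bigr)$ by sets of arbitrarily small diameter with total diameter $\leq 4(1+\lambda)d(X)$, whence that complement has $\Hm\leq 8d(X)<\infty$. Together with the countably many finite-measure sets $F^3_{\lambda_j,k}$ and the singleton $\{x_0\}$ this is the required $\sigma$-finite decomposition. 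The Chebyshev step controlling the exceptional sets is exactly what your plan is missing, and it is what allows one to obtain $\sigma$-finiteness without ever claiming $\Hm(X)<\infty$.
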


\begin{proof}
We can assume that $X$ is bounded. As in the proof of the previous lemma, we find $x_0 \in X$ such that for any $\lambda \in ]2^{-1/2},1[$
and $k\in\N$ there are Borel sets $F^1_{\lambda,k}$, $F^2_{\lambda,k}$ and $F^3_{\lambda,k}$ such that
$B(x_0,\lambda^{k-1}d(X)) \backslash B(x_0,\lambda^kd(X))= F^1_{\lambda,k} \cup F^2_{\lambda,k} \cup F^3_{\lambda,k}$,
where $d(F^1_{\lambda,k}),d(F^2_{\lambda,k}) \leq 2(1-\lambda^2)\lambda^{k-1}d(X)$
and $\Hm(F^3_{\lambda,k}) < \infty$. Taking a sequence $\lambda_j \uparrow 1$ we have 
\[ F := X \backslash \bigl( \{x_0\} \cup \bigcup_{j,k\in\N} F^3_{\lambda_j,k} \bigr) \subset \bigcup_{k\in\N} F^1_{\lambda_i,k} \cup \bigcup_{k\in\N} F^2_{\lambda_i,k} \]
for all $i\in\N$. Since now $1-\lambda^2 \to 0$ and
\[ (1-\lambda^2) \sum_{k=1}^\infty \lambda^{k-1} = 1 + \lambda \to 2 \]
as $\lambda\uparrow 1$, we have $\Hm(F) \leq 8d(X) < \infty$.
\end{proof}

By \eqref{cp} the following theorem implies Theorem~\ref{thm1}.

\begin{theorem}
\label{thm2}
  If $X$ is a metric space with $\beta(X) < \infty$ then $X$ is rectifiable.
\end{theorem}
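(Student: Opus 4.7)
The plan is to apply Proposition~\ref{prop} at a carefully chosen scale $r(x)$ around $\Hm$-almost every $x \in X$, obtaining a Lipschitz image capturing a definite $\Hm$-fraction of $X \cap B(x, r(x))$, and then exhaust $\Hm$-almost all of $X$ by iterating a Vitali-type covering over such good balls.

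First I would use Lemma~\ref{lesigma} to write $X = N \cup \bigcup_n X_n$ with $\Hm(N) = 0$ and $\Hm(X_n) < \infty$. Since countably many Borel rectifiable pieces can be spliced into one Lipschitz map (place the individual domains on disjoint subintervals of $\R$ separated by a unit gap; the resulting global Lipschitz constant stays bounded), I may assume $\Hm(X) < \infty$. A further decomposition according to dyadic ranges of the upper and lower 1-densities (using the $\Hm$-a.e.\ bound $\Theta^{*,1} \leq 1$ and the fact that the set of zero-lower-density points is $\Hm$-null on any set of finite 1-measure, by a $5r$-covering argument) lets me arrange constants $0 < \mu_0 < C_0 < \infty$ such that
\[ \mu_0 \leq \liminf_{s \downarrow 0} \frac{\Hm(X \cap B(x,s))}{s} \qquad\text{and}\qquad \Hm(X \cap B(y,s)) \leq C_0 s \]
for every $x, y \in X$ and $s > 0$.

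Fix $\tau_0 \in (0, \mu_0/2)$ and let $K, \varepsilon_0$ be the constants supplied by Proposition~\ref{prop}. The crux is showing that at $\Hm$-a.e.\ $x$ there is a scale $r(x) > 0$ at which $Z := X \cap B(x, r(x))$ also satisfies hypothesis (iii), namely $c^2_K(Z) \leq \varepsilon_0 r(x)$. Via \eqref{cp} this reduces to an upper bound on $\beta(Z)$. I would introduce the $\Hm$-integrable function $\tilde g(y) = \iint_{X^2} \partial(y,u,v)/d(\{y,u,v\})^3 \, d\Hm^2(u,v)$, so that $\beta(X \cap B(x,r)) \leq \int_{B(x,r)} \tilde g\, d\Hm$, and then combine Lebesgue differentiation (valid $\Hm$-a.e.) with a stopping-time/restriction argument confining attention to a subset of almost-full $\Hm$-measure on which $\tilde g$ is small enough that the required inequality $4K^2 \beta(Z) \leq \varepsilon_0 r(x)$ holds at the chosen scale.

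At such a good scale Proposition~\ref{prop} produces a Lipschitz map $f_{x,r} : E_{x,r} \to Z$ whose image carries $\Hm$-mass at least $(\mu_0 - \tau_0)r(x)$, a fixed positive fraction of $\Hm(Z)$. A standard $5r$-covering then extracts a countable disjoint subfamily of such good balls covering a fixed positive fraction of $\Hm(X)$, and iterating on the residual mass exhausts $\Hm$-almost all of $X$ in countably many rounds; reindexing the countably many domains onto disjoint subintervals of $\R$ yields the single Lipschitz $f : E \to X$ with $\Hm(X \setminus f(E)) = 0$ demanded by rectifiability. The main obstacle is the curvature-selection step in the previous paragraph: locating pointwise good scales $r(x)$ $\Hm$-a.e.\ from the global finiteness of $\beta(X)$ requires a careful interplay of Fubini, Lebesgue differentiation, and a subtle restriction trimming the ``bad'' set where $\tilde g$ is too large.
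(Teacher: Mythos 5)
Your high-level plan (reduce to finite $\Hm$, locate good balls $B(x,r(x))$ on which the hypotheses of Proposition~\ref{prop} hold, cover almost everything by the resulting Lipschitz images) is in the right spirit, but two steps as described have genuine gaps, and the paper's actual proof sidesteps both.

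First, hypothesis (ii) of Proposition~\ref{prop} demands $\Hm(Z\cap B(y,s))\leq C_0 s$ \emph{for every} $y\in Z$ \emph{and every} $s>0$, not merely asymptotically as $s\downarrow 0$. A decomposition by dyadic ranges of the upper and lower densities only controls the density quotients in the limit at each point, and gives no uniform bound at intermediate scales: even after discarding a null set, a point $y$ in your candidate $Z$ can have $\Hm(Z\cap B(y,s))/s$ spike at some $s$ between $0$ and $r(x)$. Producing a Borel $Z$ with $\Hm(Z)>d(Z)/40$, $\Hm(Z\cap B(z,r))\leq 3r$ at \emph{all} scales, and $\beta(Z)\leq\varepsilon d(Z)$ is exactly the content of Lemma~\ref{le0}, which is quoted from L\'eger's paper \cite{MR1709304} and relies on a stopping-time construction on top of the density theorem and the Vitali covering theorem; it is not something that falls out of Lebesgue differentiation plus a $5r$-cover. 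You flag a stopping time for the $\beta$-bound but treat the density upper bound as routine, and it is not.

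Second, the final iteration is not sound as stated. After extracting one good ball and covering a definite fraction of its mass by a Lipschitz image, the residual set $X\setminus f(E)$ need not inherit either density bound: removing the covered piece can destroy the lower density at points of the residual, so the ``next round'' cannot invoke the same good-scale argument. The paper avoids this altogether by arguing by contradiction: if $X$ were not rectifiable, there would be a subset $Y$ with $\Hm(Y)>0$ that meets every Lipschitz image in a null set; Lemma~\ref{le0} applied to $Y$ produces a good $Z\subset Y$, inequality \eqref{cp} converts the $\beta$-bound into the $c^2_K$-bound, Proposition~\ref{prop} then produces a Lipschitz $f$ with $\Hm(Z\cap f(E))>0$, contradicting the choice of $Y$. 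No exhaustion argument is needed, and the density/curvature reduction is outsourced entirely to Lemma~\ref{le0}. I would suggest reorganizing your argument around a purely unrectifiable subset and Lemma~\ref{le0}, or else supplying the full stopping-time construction and a correct exhaustion.
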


A minor modification of the following lemma can be found in \cite{MR1709304} where it is stated for a set in $\R^n$,
but the proof, which uses the density theorem and the Vitali covering theorem for Hausdorff measures,
works for any metric space.

\begin{lemma}
\label{le0}
  Let $X$ be a metric space with $0<\Hm(X)< \infty$ and $\beta(X) < \infty$.
  Then for all $\varepsilon > 0$ there is a Borel set $Z\subset X$ such that
  \begin{itemize}
    \item[\rm(i)] $\Hm(Z) > d(Z)/40$,
    \item[\rm(ii)] $\Hm(Z \cap B(z,r)) \leq 3r$ for any $z\in Z$ and $r>0$,
    \item[\rm(iii)] $\beta(Z) \leq \varepsilon d(Z)$.
  \end{itemize}
\end{lemma}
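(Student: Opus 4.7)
The plan is to follow L\'eger's approach from \cite{MR1709304}. The two main ingredients are the density theorem for $1$-dimensional Hausdorff measure in metric spaces---$\limsup_{r \to 0^+} \Hm(X \cap B(x,r))/(2r) \in [1/2,1]$ for $\Hm$-a.e.\ $x$---and an averaged upper bound for $\beta$ on small balls. I fix $n$ large enough that
\[ A_n = \{x \in X : \Hm(X \cap B(x, r)) \leq 3r \text{ for all } r \in (0, 1/n]\} \]
has positive $\Hm$-measure; this is possible because the sets $A_n$ increase to a full-measure subset of $X$ by the upper-density bound. The desired set will be $Z = A_n \cap B(x_0, r)$ for a carefully chosen $x_0 \in A_n$ and radius $r \leq 1/(2n)$.

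By Fubini and the defining bound on $A_n$,
\[ \int_{A_n} \beta(X \cap B(x, r))\, d\Hm(x) = \iiint_{X^3} \frac{\partial(z_1, z_2, z_3)}{d(\{z_1, z_2, z_3\})^3}\, \Hm\Bigl( A_n \cap \bigcap_{i=1}^{3} B(z_i, r) \Bigr)\, d(\Hm)^3 \leq 6r \iiint_{\{d(\{z_1,z_2,z_3\}) \leq 2r\}} \frac{\partial}{d^3}\, d(\Hm)^3, \]
because whenever $A_n \cap \bigcap_i B(z_i, r)$ is nonempty, any $x_\ast \in A_n$ in it forces $\bigcap_i B(z_i, r) \subseteq B(x_\ast, 2r)$---which has $\Hm$-measure at most $6r$ for $r \leq 1/(2n)$---and also $d(\{z_1,z_2,z_3\}) \leq 2r$. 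By dominated convergence the final triple integral vanishes as $r \to 0^+$, since $\partial/d^3$ is $\Hm^3$-integrable and $\partial$ vanishes on the diagonal, so $r^{-1} \int_{A_n} \beta(X \cap B(x, r))\, d\Hm(x) \to 0$.

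The crux is to find $x_0 \in A_n$ and $r \leq 1/(2n)$ simultaneously satisfying (a) $x_0$ is a Lebesgue density point of $A_n$ in $X$; (b) $\Hm(X \cap B(x_0, r)) \geq r/2$; (c) $\beta(X \cap B(x_0, r)) \leq (\varepsilon/50)\, r$. Condition (a) holds on a full-measure subset of $A_n$ (Lebesgue differentiation via the Vitali covering theorem for Hausdorff measures); (b) holds at arbitrarily small $r$ for $\Hm$-a.e.\ $x \in A_n$ by $\limsup_{r\to 0^+} \Hm(X \cap B(x,r))/(2r) \geq 1/2$; and Fatou's lemma applied to the averaged estimate yields $\liminf_{r\to 0^+} \beta(X \cap B(x, r))/r = 0$ for $\Hm$-a.e.\ $x \in A_n$, so (c) also holds at arbitrarily small $r$ for such $x$. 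The main obstacle is coordinating the scales of (b) and (c); I would apply the Vitali covering theorem to the family of balls $B(x, r)$ satisfying both conditions---a fine cover of a full-measure subset of $A_n$---to extract one whose center additionally satisfies (a).

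With $Z = A_n \cap B(x_0, r)$: for (ii), if $s \leq 1/n$ then $z \in A_n$ gives $\Hm(Z \cap B(z,s)) \leq \Hm(X \cap B(z,s)) \leq 3s$, while if $s > 1/n \geq 2r$ then $\Hm(Z \cap B(z,s)) \leq \Hm(Z) \leq 3r < 3s$; for (i), combining (a) and (b) yields $\Hm(Z) \geq r/4$, so $\Hm(Z)/d(Z) \geq 1/8 > 1/40$; for (iii), applying (ii) at $s = d(Z) \leq 1/n$ gives $d(Z) \geq \Hm(Z)/3 \geq r/12$, hence $\beta(Z) \leq \beta(X \cap B(x_0, r)) \leq (\varepsilon/50)\, r \leq (12\varepsilon/50)\, d(Z) < \varepsilon\, d(Z)$.
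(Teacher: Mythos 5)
Your preliminary steps are all sound and in the spirit the paper indicates (density theorem plus Vitali covering for Hausdorff measures, following L\'eger): the definition of $A_n$, the Fubini/Tonelli computation with the bound $\Hm\bigl(A_n\cap\bigcap_i B(z_i,r)\bigr)\leq 6r$ via an auxiliary $x_\ast$, the dominated-convergence conclusion that $r^{-1}\int_{A_n}\beta(X\cap B(x,r))\,d\Hm\to 0$, and the deduction of (i)--(iii) once a ball with properties (a)--(c) is in hand. But the step you yourself flag as the crux has a real gap. You claim the balls $B(x,r)$, $r\leq 1/(2n)$, satisfying both (b) $\Hm(X\cap B(x,r))\geq r/2$ and (c) $\beta(X\cap B(x,r))\leq(\varepsilon/50)r$ form a fine cover of a full-measure subset of $A_n$. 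For that you need, at $\Hm$-a.e.\ $x$, arbitrarily small $r$ satisfying (b) and (c) \emph{at the same $r$}. What your arguments give is (b) at arbitrarily small $r$ (from $\limsup_{r\to 0^+}\Hm(X\cap B(x,r))/r\geq 1$) and, by Fatou, (c) at arbitrarily small $r$ (from $\liminf_{r\to 0^+}\beta(X\cap B(x,r))/r=0$). These are a $\limsup$ and a $\liminf$ along possibly \emph{disjoint} sequences of radii, and neither $r\mapsto\Hm(X\cap B(x,r))/r$ nor $r\mapsto\beta(X\cap B(x,r))/r$ is monotone, so nothing forces the two sets of good radii to meet near $0$. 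As written, the Vitali step is unjustified.

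The standard repair, still squarely within ``density theorem $+$ Vitali'', is to decouple the two conditions and recover (c) from disjointness rather than pointwise. Restrict to the full-measure set $A_n^*\subset A_n$ of points $x$ that are density points of $A_n$ (relative to $\Hm\llcorner X$) with upper density $\geq 1/2$. At each $x\in A_n^*$ there are arbitrarily small $r$ with $\Hm(X\cap B(x,r))\geq r/2$ \emph{and} $\Hm(A_n\cap B(x,r))\geq r/4$ (both follow from density alone at the same $r$), so these balls with $r\leq\rho$ form a fine cover of $A_n^*$. By the $5r$-covering lemma extract disjoint $\{B(x_i,r_i)\}$ with $\bigcup_i B(x_i,5r_i)\supset A_n^*$; the $A_n$-bound gives $\sum_i r_i\geq\Hm(A_n^*)/15$. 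The sets $X\cap B(x_i,r_i)$ are pairwise disjoint with diameter $\leq 2\rho$, so $(X\cap B(x_i,r_i))^3$ are disjoint subsets of $\{d(\{z_1,z_2,z_3\})\leq 2\rho\}$ and $\sum_i\beta(X\cap B(x_i,r_i))\leq\iiint_{\{d\leq 2\rho\}}\partial/d^3\,d(\Hm)^3\to 0$ as $\rho\to 0^+$. Taking $\rho$ small enough that this is below $(\varepsilon/50)\Hm(A_n^*)/15$ forces some $i$ with $\beta(X\cap B(x_i,r_i))\leq(\varepsilon/50)r_i$, and then $Z=A_n\cap B(x_i,r_i)$ satisfies your final chain of estimates verbatim, with $\Hm(Z)\geq r_i/4$ coming directly from the chosen cover rather than from a pointwise coincidence of (b) and (c).
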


Taking Proposition~\ref{prop} for granted we can now give a proof of Theorem~\ref{thm2}
following \cite{MR1709304}.

\begin{proof}[Proof of Theorem~\ref{thm2}.]
Let $X$ be a metric space with $\beta(X) < \infty$. By Lemma~\ref{lesigma} we may assume $\Hm(X) < \infty$. Suppose to the contrary that $X$ is not rectifiable.
Then there is a subset $Y \subset X$ such that $\Hm(Y) > 0$ and $\Hm(Y \cap g(E)) = 0$ for each Lipschitz function
$g : E \to X$ with $E \subset \R$.
Let $\varepsilon_0$ and $K$ be as in Proposition~\ref{prop} depending on $\mu_0 = 1/40$, $C_0 = 3$ and $\tau_0 = 1/80$.
By Lemma~\ref{le0} we find $Z \subset Y$ so that $\beta(Z) \leq \varepsilon_0d(Z)/4K^2$, $\Hm(Z) > d(Z)/40$ and
$\Hm(Z \cap B(z,r)) \leq 3r$ for all $z \in Z$ and $r>0$.
Now $c^2_K(Z) \leq 4K^2\beta(Z) \leq \varepsilon_0d(Z)$
by \eqref{cp}.
Thus by Proposition~\ref{prop} we find a Lipschitz function $f : E \to Z$ such that
$E \subset [0,1]$ and $\Hm(Z\backslash f(E)) \leq d(Z)/80$.
Hence $\Hm(Z \cap f(E)) \geq d(Z)/80 > 0$, which is a contradiction.
\end{proof}

One can trivially replace $\beta(X)$ in Lemma~\ref{le0} by the integral $\int_{X^3} g\, d(\Hm)^3$ where $g : X^3 \to [0,\infty]$ is any Borel function.
Hence also the condition $c^2_K(X) < \infty$ and $\Hm(X) < \infty$ implies the rectifiability of $X$ provided that the constant $K$ is large enough.

We would like to say something about the converse results in general metric spaces.
The following theorem of Schul can be found in \cite{MR2337487}.


\begin{theorem}
\label{thmSchul}
  {\rm\cite{MR2337487}}
  Let $X$ be a connected 1-Ahlfors-regular metric space. Then
  \[ \beta(X) \leq C\Hm(X), \]
  where the constant $C$ depends only on the 1-Ahlfors-regurality constant of $X$.
\end{theorem}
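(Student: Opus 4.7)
The plan is to follow Schul's Traveling Salesman strategy for metric spaces, which underlies the estimate of \cite{MR2337487}. Two ingredients are needed: a reduction of the triple integral defining $\beta(X)$ to an integrated Carleson-type functional in the spirit of Peter Jones, and a multiscale packing estimate for that functional on connected $1$-Ahlfors-regular sets.

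For the first ingredient, I would fix $x\in X$ and decompose the inner double integral $\int\int \partial(x,y,z)/d(\{x,y,z\})^3\, d\Hm(y)\, d\Hm(z)$ according to the dyadic scale $r := d(\{x,y,z\})$. By $1$-Ahlfors-regularity, pairs $(y,z)\in X^2$ of scale about $r$ lie in $B(x,Cr) \times B(x,Cr)$ and hence carry measure at most $Cr^2$, while $d(\{x,y,z\})^{-3} \leq C r^{-3}$. Writing $\bar\partial(x,r)$ for the mean of $\partial(x,y,z)$ over such pairs, summation of the dyadic contributions yields
\[
\beta(X) \leq C \int_X \int_0^{d(X)} \frac{\bar\partial(x,r)}{r}\, \frac{dr}{r}\, d\Hm(x).
\]
The integrand $\bar\partial(x,r)/r$ is dimensionless and, as we will see, plays the role of a squared Jones-type flatness number.

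For the second ingredient, I would introduce an intrinsic metric $\beta$-number. Inspired by the orderable class $\oom{\phi}$ from Lemma~\ref{lef}, let $\beta_X(x,r)$ be the infimum, over orderings of a finite $\varepsilon r$-net of $X\cap B(x,r)$, of the mean consecutive additivity defect normalized by $r$. The Euclidean fact that the triangle defect is quadratic in the transverse deviation has a metric analogue giving
\[
\frac{\bar\partial(x,r)}{r} \leq C\,\beta_X(x,r)^2,
\]
provable by elementary triangle-inequality geometry inside $B(x,r)$. The heart of Schul's argument in \cite{MR2337487} then shows that for a connected $1$-Ahlfors-regular metric space,
\[
\int_X \int_0^{d(X)} \beta_X(x,r)^2\, \frac{dr}{r}\, d\Hm(x) \leq C\Hm(X),
\]
with $C$ depending only on the regularity constant. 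Chaining the three estimates above produces $\beta(X) \leq C\Hm(X)$.

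The main obstacle is the packing estimate in the last display. Schul's insight is that if $\beta_X(x,r)$ is large on many scales near a location, then $X$ must ``detour'' by a length comparable to $\beta_X(x,r)^2 \cdot r$, and these detours are summed to the total $\Hm(X)$ by a stopping-time/martingale construction on a Christ--David cube system on $X$; connectedness is crucially used to produce a scale-coherent ``direction'' at each cube. A secondary but nontrivial step is to reconcile the additivity-based $\beta_X(x,r)$ used for the quadratic bound with whichever flatness number Schul's packing argument actually consumes, since the two need to agree only up to a universal comparison valid in any metric space.
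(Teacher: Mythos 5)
The paper contains no proof of this statement: it is quoted directly from Schul's paper \cite{MR2337487}, so there is no internal argument here for your sketch to be compared with. Taken on its own, your outline correctly identifies the three-stage shape of a Schul-type argument (dyadic reslicing of the triple integral, comparison with a multi-scale flatness quantity, and a Carleson packing estimate exploiting connectedness), and the first reduction and the last, ``heart of the argument'' stage are accurately described at a high level. The middle comparison, however, is false as stated, and it is exactly where the genuine difficulty sits.

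The inequality $\bar\partial(x,r)/r \leq C\,\beta_X(x,r)^2$, with $\beta_X(x,r)$ the (infimized over orderings, $r$-normalized) mean consecutive additivity defect of an $\varepsilon r$-net of $X \cap B(x,r)$, does not hold in general, because the two sides live at different scales. Consecutive defects of an $\varepsilon r$-net are computed at the net scale $\varepsilon r$, so your $\beta_X(x,r)$ is a flatness number at scale $\varepsilon r$, while $\bar\partial(x,r)/r$ reflects scale $r$; the two are mismatched by powers of $\varepsilon$. Concretely, for the planar sawtooth curve through the points $a_i = (i\varepsilon r, (-1)^i h)$, which is a connected $1$-Ahlfors-regular set whenever $h \lesssim \varepsilon r$, one has $\bar\partial(x,r)/r \approx h^2/r^2$ (up to a harmless logarithm), whereas the consecutive triple defects are $\approx h^2/(\varepsilon r)$, giving $\beta_X(x,r)^2 \approx h^4/(\varepsilon^2 r^4)$. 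Your inequality would then force $h \gtrsim \varepsilon r$, which is exactly the regime excluded by Ahlfors-regularity; for $h \ll \varepsilon r$ the inequality fails. More fundamentally, the heuristic that ``triangle defect is quadratic in transverse deviation'' is a Euclidean (Pythagorean) fact and has no automatic analogue for arbitrary metrics; declaring that a metric version follows ``by elementary triangle-inequality geometry'' dismisses one of the key obstructions that the metric traveling salesman theory has to overcome. A correct chaining needs a $\beta$-type quantity that is \emph{defined} so that it controls the additivity defect at scale $r$ (making the comparison with $\beta(X)$ a bookkeeping step rather than a nontrivial quadratic gain), with the entire substance then residing in the Carleson packing estimate — the step your sketch leaves as a black box.
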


Combining Theorem~\ref{thm2} with Theorem~\ref{thmSchul} and \cite[Theorem 1.1]{MR2554164} one obtains the following characterization of rectifiability.

\begin{corollary}
  A metric space $X$ is rectifiable if and only if $X$ can be written as
  \[ X = \bigcup_{i=1}^\infty X_i\quad\text{with $\beta(X_i) < \infty$ for all $i$}. \]
\end{corollary}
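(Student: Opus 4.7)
The plan is to establish the two directions separately, assembling pieces already in place. For the ``if'' direction, suppose $X=\bigcup_i X_i$ with $\beta(X_i)<\infty$. Theorem~\ref{thm2} applies directly to each $X_i$ and shows that every $X_i$ is rectifiable, after which it remains to observe that a countable union of rectifiable metric spaces is again rectifiable. To prove the latter, given Lipschitz parametrizations $f_i\colon E_i\to X_i$ with $\Hm(X_i\setminus f_i(E_i))=0$, one can first reduce to the case that each $E_i$ is bounded (using Lemma~\ref{lesigma} to produce $\sigma$-finiteness and then refining the cover), rescale each domain so that the Lipschitz constant of $f_i$ becomes $1$, and translate the resulting bounded intervals in $\R$ so that they are pairwise disjoint with gap between the $k$th and $(k{+}1)$st piece at least $d(X_k)+d(X_{k+1})+d(X_k,X_{k+1})$. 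A telescoping triangle inequality then forces the glued map $F\colon \bigcup_i E_i\to X$ to be $1$-Lipschitz, and its image covers $X$ up to an $\Hm$-null set.

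For the ``only if'' direction, let $X$ be rectifiable, so that $X=f(E)\cup N$ with $\Hm(N)=0$ and $f\colon E\to X$ Lipschitz for some $E\subset\R$. The null set $N$ trivially satisfies $\beta(N)=0$ and is therefore admissible as one of the pieces. For the bulk $f(E)$ I would invoke \cite[Theorem~1.1]{MR2554164}, which decomposes a rectifiable metric space (up to a further $\Hm$-null set) into a countable family of subsets, each contained in a bounded connected $1$-Ahlfors-regular metric space $Y_i$. Theorem~\ref{thmSchul} then yields $\beta(Y_i)\leq C\Hm(Y_i)<\infty$, and since the integrand $\partial/d^3$ defining $\beta$ is nonnegative, monotonicity under subset inclusion gives $\beta(X_i)\leq\beta(Y_i)<\infty$ for the corresponding piece $X_i\subset Y_i$, producing the required countable decomposition.

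The main obstacle is purely that of lining up the two external structural inputs in the ``only if'' direction: Theorem~\ref{thmSchul} supplies the bound $\beta\leq C\Hm$ on connected $1$-Ahlfors-regular pieces, while \cite[Theorem~1.1]{MR2554164} supplies the prior decomposition of an abstract rectifiable metric space into such pieces; without that decomposition theorem the forward implication would demand substantial new work. Everything else is routine: a direct application of Theorem~\ref{thm2} on each piece, the monotonicity of $\beta$ under subset inclusion, and the standard exercise of gluing countably many Lipschitz parametrizations on a pairwise disjoint union of subintervals of $\R$.
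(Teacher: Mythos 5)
Your proof assembles exactly the three ingredients the paper itself points to: Theorem~\ref{thm2} for sufficiency, and Theorem~\ref{thmSchul} together with \cite[Theorem~1.1]{MR2554164} plus monotonicity of $\beta$ for necessity. The gluing of countably many bounded Lipschitz pieces and the reduction to bounded domains are precisely the routine verifications the paper leaves implicit, so this is the same argument, just written out.
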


\section{Preliminaries of the proof of Proposition~\ref{prop}}

From now on we assume that the hypotheses of Proposition~\ref{prop} are satisfied.
Clearly we can assume that $0<d(X)<\infty$, since else the statement of Proposition~\ref{prop} is trivial.
By replacing $X$ by $\varphi(X)$, where $\varphi:X \to \ell^\infty (X)$ is the Kuratowski embedding, we can assume that $X$ is a subset of a Banach space $\NS$.
We will construct a sequence of curves $\Gamma_n$ in $\NS$ which approximate $X$.
Each $\Gamma_n$ will be obtained by choosing points $x_i \in X$, $i=1,2,\dotsc,k(n) \in \N$, and joining $x_i$ to $x_{i+1}$ by a line segment
in $\NS$ for each $i \in \{1,2,\dotsc,k(n)-1\}$.
We then show that the length of the curve $\Gamma_n$
\[ l(\Gamma_n) := \sum_{i=1}^{k(n)-1} d(x_i,x_{i+1}) \]
is uniformly bounded by $Ld(X)$, where $L < \infty$ is a constant depending on $\mu_0$, $\varepsilon_0$ and $C_0$.
In other words, we find a sequence of $Ld(X)$-Lipschitz surjections $f_n:[0,1] \to \Gamma_n$.
Since in our construction the closure of $\bigcup_n \Gamma_n$ is a compact subset of $\NS$,
we find by the Ascoli-Arzela theorem a $Ld(X)$-Lipschitz function $f:[0,1] \to \NS$, which is the uniform limit of some subsequence of $(f_n)$.
Finally we show that $\m(X \backslash \Gamma)$ is small. Here we denote $\Gamma = f([0,1])$.

We now describe how we choose the vertices for the curves $\Gamma_n$.
Let $n_0$ be the largest integer such that $d(X) \leq 2^{-n_0}$, and set $H_{n_0}=D_{n_0-1}=\emptyset$.
Let now $n \geq n_0$ and assume by induction that we have defined $H_n$ and $D_{n-1}$.
Denote
\[ \DP_n = \bigcap_{m=n_0}^{n+N_0} \{\, x \in X\, :\, \m(B(x,2^{-m})) \geq \delta 2^{-m}\, \}, \]
where $N_0 \in \N$ and $\delta>0$ are constants fixed later.
For any $x\in \DP_n$ we choose a point $q_n(x) \in B(x,2^{-n-N_0})$ such that
\begin{align}
\label{kaava1}
 \int_{\A_n(x)} c(z_1,z_2,q_n(x))^2\, d\m^2(z_1,z_2) \leq \dashint_{B(x,2^{-n-N_0})}\int_{\A_n(x)} c(z_1,z_2,z_3)^2\, d\m^2(z_1,z_2)\, d\m z_3,
\end{align} 
where
$\A_n(x) = \{\, (z_1,z_2) \in \left( B(x,R_1 2^{-n}) \backslash B(x,r_1 2^{-n}) \right)^2 \, :\, d(z_1,z_2) > r_1 2^{-n}\, \}$.
Here $R_1$ and $r_1$ are positive constants fixed later.
We set
\[ D_n = q_n(D_n'), \]
where $D_n'$ is a maximal subset of $\DP_n \backslash \bigcup_{y \in H_n} B(y)$ such that $d(z_1,z_2) > 2^{-n}$ for any distinct $z_1,z_2\in D_n'$.
For any $y \in H_n$ we write $B(y) = B(q_{m(y)}^{-1}(y),2^{-m(y)+3})$, where $m(y)$ is the largest integer $m$ such that $y \in D_m$.
We further set
\begin{equation}
\label{Hdef}
  H_{n+1} = H_n \cup \biggl\{\, x \in D_n\, :\, \m\bigl(B(q_n^{-1}(x),2^{-n+4}) \backslash \bigcup_{y \in H_n} B(y)\bigr) \leq C_1\delta 2^{-n}\, \biggr\},
\end{equation}
where $C_1 < \infty$ is a constant fixed later.
Denote $X_n = D_n \cup H_n$.
The curve $\Gamma_n$ is now determined by the set $X_n$ and an order on $X_n$.

Notice that $X_n$ is a finite subset of $X$, because $\mu(X)<\infty$ by (ii). Further
\begin{equation}
\label{d1}
  d(z_1,z_2) > (1-2^{-N_0+1})2^{-n} \geq 2^{-n-1}
\end{equation}
for any distinct $z_1,z_2\in X_n$ for all $n \geq n_0$. 
Since $D_{n+1}'  \subset \DP_n \backslash \bigcup_{y \in H_n} B(y)$ we trivially have
\begin{gather}
    d(x,D_n') \leq  2^{-n} \quad\text{for all $x \in D_{n+1}'$},  \label{d2}  \\
    d(x,D_n) \leq (1 + 2^{-N_0+1})2^{-n} < 2^{-n+1}\quad\text{for all $x \in D_{n+1}$}.  \label{d3}
\end{gather}
For any $x\in X$ and $r>0$ we set
\[ c^2(x,r) = \frac{c^2_K(B(x,r),\m)}{r}. \]
Let $\varepsilon_1 > 0$ and $Z = \{\, z \in X\, :\, \text{$c^2(x,r) > \varepsilon_1$ for some $r > 0$}\, \}$. Let us choose for each $z\in Z$ a number $r(z)$
such that $c^2(z,r(z)) > \varepsilon_1$. Now $Z \subset \bigcup_{z\in Z} B(z,r(z))$. By the $5r$-covering lemma we find a countable set $Z_1 \subset Z$
such that $Z \subset \bigcup_{z\in Z_1} B(z,5r(z))$ and $B(z_1,r(z_1)) \cap B(z_2,r(z_2)) = \emptyset$ for distinct $z_1,z_2\in Z_1$, and we get by (iii)
\begin{align*}
  \m(Z) &\leq \sum_{z\in Z_1} \m(B(z,5r(z))) \leq 5C_0\sum_{z\in Z_1} r(z)
  < \frac{5C_0}{\varepsilon_1}\sum_{z\in Z_1}  c^2_K(B(z,r(z)),\m)  \\
  &\leq \frac{5C_0\varepsilon_0d(X)}{\varepsilon_1} \leq \frac{\tau_0d(X)}{2}
\end{align*}
as long as $\varepsilon_1 \geq 10C_0\varepsilon_0\tau_0^{-1}$.
Thus we can without loss of generality assume that
\begin{equation}
\label{eps1}
  c^2(x,r) \leq \varepsilon_1
\end{equation}
for all $x \in X$ and $r > 0$. We will fix the constant $\varepsilon_1$ later.

\begin{lemma}
\label{le2}
  For each integer $n \geq n_0$, $d(x,X_{n+1}) < 2^{-n+5}$ for all $x \in X_n$.
\end{lemma}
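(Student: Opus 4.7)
The plan is to case-split on whether $x$ lies in $H_n$ or $D_n$. If $x \in H_n$, then by (\ref{Hdef}) $H_n \subseteq H_{n+1} \subseteq X_{n+1}$, so $x \in X_{n+1}$ itself and there is nothing to prove. Otherwise $x \in D_n$; set $x' = q_n^{-1}(x) \in D_n'$, so $d(x,x') \leq 2^{-n-N_0}$. If additionally $x \in H_{n+1}$ we are again done, so assume $x \in D_n \setminus H_{n+1}$. By the definition of $H_{n+1}$,
\[ \mu\bigl(B(x', 2^{-n+4}) \setminus \bigcup_{y \in H_n} B(y)\bigr) > C_1 \delta 2^{-n}. \]

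I would next split according to whether the ball $B(x', 2^{-n+4})$ meets any $B(y)$ with $y \in H_{n+1} \setminus H_n$. If some such $y$ exists, then $y \in D_n$ and $B(y) = B(q_n^{-1}(y), 2^{-n+3})$, so $d(x', q_n^{-1}(y)) < 2^{-n+4} + 2^{-n+3}$. Since $x \notin H_{n+1}$ forces $y \neq x$, the triangle inequality gives $d(x, y) \leq 2 \cdot 2^{-n-N_0} + 3 \cdot 2^{-n+3} < 2^{-n+5}$, and $y \in H_{n+1} \subseteq X_{n+1}$ is the desired point.

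If instead no such $y$ exists, no new $H$-ball intersects $B(x', 2^{-n+4})$, so the set $Y = B(x', 2^{-n+4}) \setminus \bigcup_{y \in H_{n+1}} B(y)$ still has $\mu(Y) > C_1 \delta 2^{-n}$. The aim is to exhibit $z \in Y \cap \DP_{n+1}$; granted such $z$, the maximality of $D_{n+1}'$ yields $w \in D_{n+1}'$ with $d(z,w) \leq 2^{-n-1}$ (otherwise $z$ itself could be adjoined), and then $q_{n+1}(w) \in D_{n+1} \subseteq X_{n+1}$ satisfies
\[ d(x, q_{n+1}(w)) \leq 2^{-n-N_0} + 2^{-n+4} + 2^{-n-1} + 2^{-n-1-N_0} < 2^{-n+5}. \]
To produce $z$ I would upper-bound $\mu(B(x', 2^{-n+4}) \setminus \DP_{n+1})$ by applying the $5r$-covering lemma to each low-density set $U_m = \{z \in X : \mu(B(z, 2^{-m})) < \delta 2^{-m}\}$ for $m \in \{n_0, \ldots, n+1+N_0\}$, using the fact that $x' \in \DP_n$ forces $\mu(B(x', 2^{-m})) \geq \delta 2^{-m}$ whenever $m \leq n + N_0$ to control which scales can really contribute. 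With $N_0$ taken large enough and $C_1$ chosen accordingly, this upper bound will stay strictly below $C_1 \delta 2^{-n}$, so that $Y \cap \DP_{n+1}$ has positive measure and the desired $z$ exists.

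The main obstacle is the covering estimate in the last case, where one has to absorb both the low-density exceptional set and the geometry of the new $H_{n+1}$-balls into the slack provided by $C_1 \delta 2^{-n}$; after that, the rest of the argument reduces to triangle-inequality bookkeeping with the various small radii and the $q_n$-perturbations.
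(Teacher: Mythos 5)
Your bookkeeping in the "easy'' branches is fine: the reduction to $x\in D_n\setminus H_{n+1}$, the treatment of the case where $B(x',2^{-n+4})$ meets a ball $B(y)$ with $y\in H_{n+1}\setminus H_n$ (noting $m(y)=n$ so $B(y)=B(q_n^{-1}(y),2^{-n+3})$), and the final step via maximality of $D_{n+1}'$ all match the paper and the arithmetic checks out. The problem is the heart of the argument: producing $z\in Y\cap\DP_{n+1}$.

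Your proposed route --- bound $\mu\bigl(B(x',2^{-n+4})\setminus\DP_{n+1}\bigr)$ by a $5r$-covering of the low-density sets $U_m=\{z:\mu(B(z,2^{-m}))<\delta 2^{-m}\}$ --- cannot work. The $5r$-covering estimate needs a \emph{lower} bound on the measure of the disjoint balls to control how many there are (this is what makes it work in Section 2 for $c^2(x,r)>\varepsilon_1$, where the disjoint balls each carry at least $\varepsilon_1 r(z)$ worth of curvature). For $U_m$ you only have an \emph{upper} bound $\mu(B(z_i,2^{-m}))<\delta 2^{-m}$, so $\mu(U_m\cap B(x',2^{-n+4}))\leq N\delta 2^{-m}$ with $N$ uncontrolled; indeed $U_m$ can fill essentially the whole ball (think of $\mu$ with Cantor-like local structure). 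The fact that $x'\in\DP_n$ doesn't rescue this either: it gives $\mu(B(z,2^{-m}))\geq\mu(B(x',2^{-m}-d(x',z)))\geq\delta 2^{-m-1}$ for nearby $z$ at the larger scales, which is off by a factor of $2$ from $\DP_{n+1}$ membership and gives nothing at the scales $m>n$. The paper instead uses two ingredients you omit: (a) Lemma~\ref{led}, which depends on the curvature smallness \eqref{eps1}, to find a concentration point $z$ with $\mu(B(z,2^{-n-N_0-1}))>\eta C_1\delta 2^{-n}$ --- this disposes of all scales $m\geq n-5$ once $C_1$ is large; and (b) a contradiction argument for $m\in[n_0,n-6]$: if $\mu(B(z,2^{-m}))<\delta 2^{-m}$, then a nearby $w\in D_{m+5}$ (from \eqref{d2}) would satisfy the low-measure criterion in \eqref{Hdef}, so $w\in H_{m+6}\subset H_n$, forcing $x'=q_n^{-1}(x)$ outside $B(w)$ --- contradicting $d(x',q_{m+5}^{-1}(w))\leq 2^{-m-4}$. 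Both pieces use specific structure (the curvature condition and the $H$-sets) that your covering sketch discards, so the "main obstacle'' you flag is not just a technicality but a missing idea.
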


\begin{proof}
Since $H_n \subset H_{n+1} \subset X_{n+1}$ we can assume that $x \in D_n$ and
\[ \m\bigl(B(q_n^{-1}(x),2^{-n+4}) \backslash \bigcup_{y \in H_n} B(y)\bigr) > C_1\delta 2^{-n}. \]
By choosing $\varepsilon_1$ small enough depending on $N_0$ and $C_1\delta$ and then using Lemma~\ref{led}
we find $z \in B(q_n^{-1}(x),2^{-n+4}) \backslash \bigcup_{y \in H_n} B(y)$ such that
\begin{equation}
\label{a1}
  \m(B(z,2^{-n-N_0-1})) > \eta C_1\delta 2^{-n},
\end{equation}
where $\eta > 0$ depends on $N_0$.
  
We next show that $z \in \DP_{n+1}$. If $n \leq n_0+N_0+6$ this follows directly from \eqref{a1}
provided that $C_1$ is big enough depending on $N_0$.
Let us now assume that $n > n_0+N_0+6$.
We first show that $z \in \DP_{n-N_0-6}$.
If this is not true then there is an integer $m\in [n_0,n-6]$ such that
$\m(B(z,2^{-m})) < \delta 2^{-m}$.
Using \eqref{d2} we find $w \in D_{m+5}$ such that
\begin{equation}
\label{a2}
  d(q_n^{-1}(x),q_{m+5}^{-1}(w)) \leq 2^{-m-4}.
\end{equation}
Thus
$d(z,q_{m+5}^{-1}(w)) \leq 2^{-n+4} + 2^{-m-4} \leq 2^{-m-1}$
and so
\[ \m(B(q_{m+5}^{-1}(w),2^{-m-1})) \leq \m(B(z,2^{-m})) < \delta 2^{-m}. \]
By choosing $C_1 \geq 32$ we have that $w \in H_{m+6}$.
From this we get $d(q_n^{-1}(x),q_{m+5}^{-1}(w)) > 2^{-m-2}$,
which contradicts \eqref{a2}. So we have $z \in \DP_{n-N_0-6}$.
This and \eqref{a1} give $z \in \DP_{n+1}$
provided that $C_1$ is big enough depending on $N_0$.

If $z \not\in B(y)$ for all $y \in H_{n+1}$ then
$d(x,D_{n+1}) \leq d(x,z) + d(z,D_{n+1}) \leq 2^{-n+4} + 2^{-n-N_0} + 2^{-n-1} + 2^{-n-1-N_0} < 2^{-n+5}$.
Else $z \in B(y)$ for some $y \in H_{n+1} \backslash H_{n}$ and we get
$d(x,H_{n+1}) \leq d(x,z) + d(z,H_{n+1}) \leq 2^{-n+4} + 2^{-n+3} + 2^{-n-N_0+1} < 2^{-n+5}$.
\end{proof}

Let $n > n_0$. 
Let us write
\[ D_n^* = \{\, x \in D_n\, :\, d(x,D_{n-1}) \leq \vartheta2^{-n+1}\, \} = \left\{ x_n^1,\dotsc,x_n^{j_n} \right\}, \]
where $j_n = \#D_n^*$ and $\vartheta$ is any fixed constant between $1/4$ and $1/3$.
We define $X_{n-1}^0 = X_{n-1}$ and inductively
\[ X_{n-1}^k = \left(X_{n-1}^{k-1} \backslash \{ p(x_n^k) \}\right) \cup \left\{ x_n^k \right\} \]
for $k=1,\dotsc,j_n$,
where $p(x)$ be the unique point in $X_{n-1}^{k-1}$ such that $d(x,p(x)) = d(x,X_{n-1}^{k-1})$.
Notice that $p(x) \in D_{n-1}$ for all $x\in D_n^*$, and the mapping $p:D_n^* \to D_{n-1}$
is injective by \eqref{d1}. This is because $N_0$ is chosen to be a large integer.
Further we denote $k_n = \#D_n$ and write
\[ D_n \backslash D_n^* = \left\{ x_n^{j_n+1},\dotsc,x_n^{k_n} \right\}, \]
where
\begin{equation}
\label{xn}
  d\left( x_n^k, X_{n-1}^{k-1} \right) = \max\left\{\, d\left( x, X_{n-1}^{k-1} \right)\, :\, x\in X_n\, \right\}, 
\end{equation}
and
\[ X_{n-1}^k = X_{n-1}^{j_n} \cup \left\{ x_n^{j_n+1},\dotsc,x_n^k \right\} \]
for $k = j_n+1,\dotsc,k_n$. 

For any $n \geq n_0$ and $z \in X_n \cup D_{n+1}$ we denote
\[  m_n(z) = \begin{cases}
      m(z)	& \text{if $z \in H_n$,}  \\
	n	& \text{if $z \in D_n \backslash D_{n+1}$,}  \\
	n+1	& \text{if $z \in D_{n+1}$}
  \end{cases}
\]
and
\[ B_n(z) = B(q_{m_n(z)}^{-1}(z),2^{-m_n(z)-N_0}). \]
Suppose that $n \geq n_0$, $k \in \{0,\dotsc,k_{n+1}\}$ and $z_1,z_2$ are distinct points in $X_n^k$.
Let also $w_i \in B_n(z_i)$ for $i = 1,2$.
By the construction
\begin{equation}
\label{Q}
  Q_1^{-1}d(z_1,z_2) < d(w_1,w_2) < Q_1d(z_1,z_2),
\end{equation}
where $Q_1 = \vartheta(\vartheta - 2^{-N_0+2})^{-1}$. 
The constants $M_1$ and $\phi_1 \in [0,1[$ which appear in the next lemma will be fixed later.

\begin{lemma}
\label{leordom}
  The set $B(x,2^{-n+M_1+1}) \cap X_n^k$ belongs to $\oom{\phi_1}$
  for all $x\in X$, $n \geq n_0$ and $k \in \{0,\dotsc,k_{n+1}\}$.
\end{lemma}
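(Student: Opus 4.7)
The plan is to first derive the $\om{\phi_1}$ property of $Z:=B(x,2^{-n+M_1+1})\cap X_n^k$ from the scale-$r$ curvature bound \eqref{eps1}, then invoke Lemma~\ref{lef} to upgrade to orderability, dispatching the exceptional 4-cycle by an induction on the construction.

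For the $\om{\phi_1}$ step, fix distinct $z_1,z_2,z_3\in Z$. By \eqref{d1} their pairwise distances lie in $[2^{-n-1},2^{-n+M_1+2}]$. Each $z_i$ carries a ball $B_n(z_i)$ of radius at most $2^{-n-N_0+1}$ with $\mu(B_n(z_i))\ge\delta 2^{-n-N_0-1}$; by \eqref{Q} these three balls are pairwise disjoint, sit inside $B(x,2^{-n+M_1+2})$, and every $(w_1,w_2,w_3)$ in their product satisfies $Q_1^{-1}d(z_i,z_j)<d(w_i,w_j)<Q_1d(z_i,z_j)$. Choosing $K$ large enough in terms of $M_1$ and $N_0$, every such triple lies in $\TK(X)$, and \eqref{eps1} applied at scale $2^{-n+M_1+2}$ yields
\[ \int_{B_n(z_1)\times B_n(z_2)\times B_n(z_3)} c(w_1,w_2,w_3)^2\,d\mu^3 \le \varepsilon_1\cdot 2^{-n+M_1+2}. \]
Since $Q_1\to 1$ as $N_0\to\infty$, comparable planar sides force comparable angles, so $c(w_1,w_2,w_3)\ge c(z_1,z_2,z_3)/2$ throughout the product for $N_0$ large; averaging and using $\prod_i\mu(B_n(z_i))\gtrsim(\delta 2^{-n})^3$ then delivers
\[ c(z_1,z_2,z_3)\cdot 2^{-n}\le C(M_1,\delta,N_0)\sqrt{\varepsilon_1}. \]
Because $c(z_1,z_2,z_3)\cdot d(z_{\sigma(1)},z_{\sigma(3)})=2\sin\alpha_\sigma$ for each interior angle $\alpha_\sigma$ of the planar isometric image of $\{z_1,z_2,z_3\}$, and the three angles sum to $\pi$, exactly one is close to $\pi$; the triple therefore lies in $\om{\phi_1}$ for any preassigned $\phi_1<1$, provided $\varepsilon_1$ is chosen sufficiently small.

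Since the maximum-to-minimum distance ratio on $Z$ is bounded by $L:=2^{M_1+3}$, Lemma~\ref{lef} produces a threshold $\phi_\ast<1$. Taking $\phi_1\ge\phi_\ast$ and $\varepsilon_1$ small enough to complete the previous step, $Z\in\om{\phi_1}$ and Lemma~\ref{lef} forces $Z$ either to be orderable (in which case we are done) or to be a cyclic $4$-point set $\{v_1,v_2,v_3,v_4\}$ with $v_1v_2v_3$, $v_2v_3v_4$, $v_3v_4v_1$, $v_4v_1v_2$; note this second possibility occurs only when $|Z|=4$.

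Excluding this last exception is the main obstacle, handled by induction on the construction pair $(n,k)$ (the base case being immediate since $X_{n_0}=D_{n_0}$ has small diameter). At a swap step ($k\le j_n$) the new point $x_n^k$ lies within $\vartheta 2^{-n+1}$ of $p(x_n^k)\in X_{n-1}^{k-1}$, while $X_{n-1}^{k-1}$ has separation at least of order $2^{-n+1}$ from \eqref{d1}; Lemma~\ref{lemove} applied to each triple containing $p(x_n^k)$ transfers the inherited order to the same order with $x_n^k$ replacing $p(x_n^k)$, so no new 4-cycle can arise. At an insertion step ($k>j_n$) the maximality \eqref{xn} combined with $x_n^k\notin D_n^*$ forces $d(x_n^k,X_{n-1}^{k-1})>\vartheta 2^{-n+1}$, and together with the $\om{\phi_1}$ output of the first step this places $x_n^k$ as an extremal vertex in every triple it enters, so the inherited order on $X_{n-1}^{k-1}$ extends uniquely without creating a cyclic configuration. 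The constants have to be fixed in the order $\phi_1$ (to validate Lemma~\ref{lef} with $L=2^{M_1+3}$ and the quantitative hypotheses of Lemma~\ref{lemove}), then $M_1$ and $N_0$, and finally $\varepsilon_1$ small and $K$ large.
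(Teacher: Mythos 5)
Your first step—deriving $\om{\phi_1}$ for $Z := B(x,2^{-n+M_1+1})\cap X_n^k$ from the scale-$r$ curvature bound \eqref{eps1}, by integrating over the product of the balls $B_n(z_i)$ and comparing Menger curvatures of comparable triples—is essentially what the paper does, and this part of your argument is sound. The gap is in the exclusion of the cyclic $4$-point exception from Lemma~\ref{lef}, which is precisely where the paper's proof has its one genuinely clever move.

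Your inductive argument does not go through as stated. At a swap step you appeal to Lemma~\ref{lemove} to transfer the order when $p(x_n^k)$ is replaced by the nearby point $x_n^k$; but this requires that the ball $B(x,2^{-n+M_1+1})$ under consideration actually contain $p(x_n^k)$, and that the inductive hypothesis for $X_{n-1}^{k-1}$ furnish an order on a genuinely larger neighbourhood—you need to handle points near the boundary of the ball, and Lemma~\ref{lemove} has a quantitative hypothesis (involving $\phi_1$ and the minimum separation) that you do not verify. More seriously, at an insertion step your claim that the maximality \eqref{xn} together with $x_n^k\notin D_n^*$ ``places $x_n^k$ as an extremal vertex in every triple it enters'' is unjustified: $d(x_n^k,X_{n-1}^{k-1})>\vartheta 2^{-n+1}$ is a lower bound on the \emph{minimum} distance from $x_n^k$ to the existing set, and in a near-square $4$-cycle the minimum distance from any vertex to the other three is a ``side,'' not a ``diagonal,'' so this distance condition is fully compatible with $x_n^k$ sitting as one vertex of a $4$-cycle. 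In particular, a $3$-point orderable configuration in $X_{n-1}^{k-1}$ can, after one insertion, become a cyclic $4$-point set, and nothing in your induction rules this out.

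The paper avoids all of this by introducing an auxiliary fifth point. It fixes $z_0\in Z$, sets $m_0=m_n(z_0)$, looks at the annulus $A = B(q_{m_0}^{-1}(z_0),2^{-m_0-2})\backslash B(q_{m_0}^{-1}(z_0),\sigma 2^{-m_0})$ (which has $\mu$-measure $\gtrsim \delta 2^{-m_0}$ by (ii) and the definition of $\DP_n$), and uses Lemma~\ref{led} to extract $y_0\in A$ whose small ball $U_{y_0}=B(y_0,2^{-m_0-N_0})\cap A$ carries a definite fraction of $\mu(A)$. One then runs exactly your curvature-versus-\eqref{eps1} argument for the enlarged set $Z\cup\{y_0\}$ (using $U_{y_0}$ in place of a ball $B_n(\cdot)$ for the new point), concludes $Z\cup\{y_0\}\in\om{\phi_1}$, and finally applies Lemma~\ref{lef}: if $\#Z=4$ the enlarged set has five points, so the exceptional cyclic configuration cannot occur, and $Z\cup\{y_0\}$ is orderable; orderability is inherited by the subset $Z$. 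This cleanly disposes of the exception without any induction on the construction, and it is the ingredient your proposal is missing.
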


\begin{proof}
Let us denote $Z = B(x,2^{-n+M_1+1}) \cap X_n^k$.
Now $\vartheta <  2^nd(z_1,z_2) \leq 2^{M_1+2}$ for all distinct $z_1,z_2 \in Z$.
Assuming that $\#Z \geq 2$
the construction gives that $m_n(z) \geq n-M_1-2$ for all $z \in Z$ (see \eqref{d1}).

Choose $z_0 \in Z$ and denote $A = B(q_{m_0}^{-1}(z),2^{-m_0-2}) \backslash B(q_{m_0}^{-1}(z),\sigma2^{-m_0})$,
where $m_0 = m_n(z_0)$.
Now $\m(A) \geq (\delta 2^{-2}-C_0\sigma)2^{-m_0}$.
By choosing $\sigma > 0$ small enough depending on on $\delta$ and $C_0$ and taking $\varepsilon_1$ small enough depending on
$\delta$ and $N_0$, and then using Lemma~\ref{led}
we find $y_0 \in A$ such that
$\m(U_{y_0}) \geq \eta \m(A)$,
where $U_{y_0} = B(y_0,2^{-m_0-N_0}) \cap A$ and $\eta > 0$ depends on $N_0$.
Now $l < 2^{n}d(z_1,z_2) \leq L$ for all distinct $z_1,z_2 \in Z \cup \{y_0\}$,
where $l = \min\{2^{-2}(\sigma-2^{-N_0}),\vartheta-2^{-2}-2^{-N_0}\}$ and
$L = 2^{M_1+2} + 2^{M_1+1}$.
Furthermore, choosing $N_0$ big enough depending on $\sigma$ and $\vartheta$, for any $y \in U_{y_0}$ and
$w \in B_n(z)$, $z \in Z$,
\begin{align*}
  Q_2^{-1}d(y_0,z) < d(y,w) < Q_2d(y_0,z),
\end{align*}
where $Q_2 = \max\{ (\sigma+2^{-N_0+1})(\sigma-2^{-N_0})^{-1} , (\vartheta-2^{-2}-2^{-N_0})(\vartheta-2^{-2}-2^{-N_0+1})^{-1} \}$.

Suppose now to the contrary that $Z \cup \{y_0\} \supset \{z_1,z_2,z_3\} \not\in \om{\phi_1}$.
For $i = 1,2,3$ let $w_i \in U_{y_0}$ if $z_i = y_0$ and $w_i \in B_n(z_i)$ if $z_i \neq y_0$.
Denote $d_{ij} = d(z_i,z_j)$ and $d_{ij}' = d(w_i,w_j)$ for $i,j = 1,2,3$, and
assume that $\ordd{w_1}{w_2}{w_3}$ and $d_{12}\geq d_{23}$.
By denoting $Q = \max\{Q_1,Q_2\}$ (see \eqref{Q}) and choosing $N_0$ big enough depending on $\sigma$, $\vartheta$,
$M_1$ and $\phi_1$,
\begin{align*}
  \frac{d_{13}' - d_{12}'}{d_{23}'}
  \leq \frac{Qd_{13} - Q^{-1}d_{12}}{Q^{-1}d_{23}}
  \leq \frac{d_{13} - d_{12} + (Q^2-1)d_{13}}{d_{23}}
  < \phi_1 + (Q^2-1)L/l < 1.
\end{align*}
So we have
\begin{align*}
  c(z_1,z_2,z_3)^2 = \frac{(2\sin\alpha)^2}{d(z_1,z_3)^2}
  \geq \frac{4(1-\cos^2\alpha)}{(2^{M_1+3})^22^{-2n}}
  \geq \frac{1-\max\{\theta^2,1/4\}}{(2^{M_1+2})^22^{-2n}},
\end{align*}
where $\alpha = \sphericalangle z_1z_2z_3$ and
$\theta  = \phi_1 + (Q^2-1)L/l$.

By using the above estimates and choosing the constant $K$ big enough depending on
$l$, $L$ and $Q$ we deduce that
the number $c^2(x,2^{-n+M_1+2})$ is larger than some positive constant depending on
$\theta$, $M_1$, $\delta$ and $N_0$. Taking $\varepsilon_1$ small enough this contradicts
\eqref{eps1} and so $Z \cup \{y\} \in \om{\phi_1}$.
By Lemma~\ref{lef} we can choose $\phi_1<1$ depending only on $L/l$ such that
$Z$ is orderable. Notice that if $\#Z = 4$ then we can apply Lemma~\ref{lef}
for $Z \cup \{y_0\}$.
\end{proof}

\section{Construction and length of $\Gamma$}

Notice that $\DP_{n_0} \neq \emptyset$ by Lemma~\ref{led} provided that $\varepsilon_0$ and $\delta$
are small enough depending on $N_0$ and $\mu_0$. Thus $D_{n_0}$ consists of one point. Further
$X_n \neq \emptyset$ for all integers $n \geq n_0$ by Lemma~\ref{le2}.
We define
$ \Gamma_{n_0}^0 = D_{n_0}$ and $\E_{n_0}^0 = \emptyset$.
For any indices $n \geq n_0$ and $k \in \{0,\dotsc,\#X_{n+1}\}$ we will denote by $\E_n^k$ the set of the edges of the curve $\Gamma_n^k$.
So $\Gamma_n^k$ is determined by $\E_n^k$ unless $\E_n^k$ is empty and $\Gamma_n^k$ is reduced to one point.
We will also write for $y \in X_n^k$
\[ N_n^k(y) = \{\, w \in X_n^k\, :\, \{y,w\} \in \E_n^k\, \}. \]

Let now $n \geq n_0$ and $k\in\{0,\dotsc,j_{n+1}-1\}$, and assume by induction that we have already constructed a curve $\Gamma_n^k$
such that $X_n^k \subset \Gamma_n^k$ and the following hypothesis is satisfied:
\begin{itemize}
  \item[(H1)] If $z \in X_n$, $B(z,2^{-n+M_1}) \cap X_n = \{x_1,\dotsc,x_j\}$
  and $x_1x_2\dotsc x_j$, then
  $\{ x_i,x_{i+1} \} \in \E_n^0$ for all $i \in \{1,\dotsc,j-1\}$.
\end{itemize}
Notice that (H1) is trivially true for $n=n_0$.
We now construct a curve $\Gamma_n^{k+1}$
such that $X_n^{k+1} \subset \Gamma_n^{k+1}$.
Denote $x=x_{n+1}^{k+1}$ and let $y\in X_n$ be such that $d(x,y) = d(x,X_n)$.
We simply replace $y$ by $x$, i.e. we set
\[ \E_n^{k+1} = \left(\E_n^k \backslash \{\, \{y,w\}\, :\, w \in N_n^k(y)\, \} \right) \cup \{\, \{x,w\}\, :\, w \in N_n^k(y)\, \}. \]
Since
$d(z_1,z_2) > (1-2^{-N_0+1}-2\vartheta)2^{-n}$ for any distinct $z_1,z_2 \in X_n^k$
by \eqref{d1} and
$d(x,y) \leq \vartheta 2^{-n} \leq \phi_1(1-2^{-N_0+1}-2\vartheta)2^{-n}$
by our choice of the constants, we easily see by Lemma~\ref{leordom}, Lemma~\ref{lemove}
and (H1) that the following hypothesis will be satisfied for each $j \in \{0,\dotsc,j_{n+1}\}$:
\begin{itemize}
  \item[(H2)] If $z \in X_n^j$, $B(z,(2^{M_1}-1)2^{-n}) \cap X_n^j = \{y_1,\dotsc,y_l\}$
  and $y_1y_2\dotsc y_l$, then
  $\{ y_i,y_{i+1} \} \in \E_n^j$ for all $i \in \{1,\dotsc,l-1\}$.
\end{itemize}

We now want to estimate the difference $l(\Gamma_n^{k+1})-l(\Gamma_n^k)$ in certain cases.
If $\#N_n^k(y) = 1$ we will use the simple estimate
\begin{equation}
\label{est11}
  l(\Gamma_n^{k+1})-l(\Gamma_n^k) \leq d(x,y)  \leq \vartheta 2^{-n}.
\end{equation}
Let us now assume that $\#N_n^k(y) = 2$ and $d(x,w_i) \leq 2^{-n+M_2}$ for $i=1,2$,
where $\{w_1,w_2\} = N_n^k(y)$ and
$M_2$ is a large constant fixed later.
Denote
$Z = Z(x) \cap Z(w_1) \cap Z(w_2)$, where
\[ Z(z) = X_{n+N_1} \cap B(x,2^{-n+M_2+3}) \backslash B(z,2^{-n-N_1}). \]
Here $N_1$ is an integer larger than $10$.
By the construction $m_{n+N_1}(v) \geq n-M_2$ for any $v \in Z$ as well as $m_n(w_1) \geq n-M_2$ for $i=1,2$.
Thus by choosing $N_0$, $K$ and $\phi_1<1$ big enough and $\varepsilon_1$ small enough depending also on $N_1$ we see as in the proof of Lemma~\ref{leordom}
that $Z \cup \{x,w_1,w_2\} \in \oom{\phi_1}$.

Assume first that there does not exist $v \in Z$ such that $xvw_1$.
In this case we content ourselves with showing that an endpoint of $\Gamma_{n+N_1}^0$ (i.e. $z\in X_{n+N_1}$ with $\#N_{n+N_1}^0(z) \leq 1$)
or a relatively long edge of $\Gamma_{n+N_1}^0$ lies close to $x$.
By Lemma~\ref{le2} we find $u \in X_{n+N_1}$ with $d(x,u) < 2^{-n+5}$.
Further we assume that there exist $u_1,u_2 \in X_{n+N_1}$ such that $\{u_1,u,u_2\}\in\om{\phi_1}$, $u_1uu_2$ and
$2  < 2^{n-M_2}d(u,u_i) \leq 4$ for $i=1,2$.
If this is not the case then by Lemma~\ref{leordom} and (H1) an endpoint of $\Gamma_{n+N_1}^0$
lies in $B(u,2^{-n+M_2+1}) \subset B(x,2^{-n+M_2+2})$ or there exists $\{y_1,y_2\} \in \E_{n+N_1}^0$ such that
$d(x,y_1) \leq 2^{-n+M_2+1} < d(y_1,y_2)$. For this we choose $M_1 \geq N_1 + M_2 + 2$.
Notice that we used (H1) for $n+N_1$ though we have not verified it yet. This will be done later.
Now $u_1,u_2 \in Z$ since
$(2^{M_2} + 1)2^{-n} \leq (2^{M_2+1} - 32)2^{-n} < d(u_i,u) - d(u,x) \leq d(u_i,x) \leq d(u_i,u) + d(u,x) < (2^{M_2+2} + 32)2^{-n} \leq 2^{-n+M_2+3}$
and
$d(u_i,w_j) \geq d(u_i,u) - d(x,w_j) - d(x,u) > (2^{M_2}-32)2^{-n} \geq 2^{-n-N_1}$
for $i,j=1,2$.
Since $16 \leq \phi_12^{M_2}$, Lemma~\ref{lemove}~(ii) gives $u_1xu_2$.
Thus $u_1w_1xu_2$ or $u_1xw_1u_2$.
Denote $Z_1 = \{\, v \in Z\, :\, vw_1x\,  \}$ and $Z_2 = \{\,  v \in Z\, :\, w_1xv\,  \}$.
Now $Z = Z_1 \cup Z_2$ and
\[ d(Z_1 \cup B(w_1,2^{-n-N_1}), Z_2 \cup B(x,2^{-n-N_1}) \cup B(w_2,2^{-n-N_1})) > d(w_1,x) - 2^{-n-N_1+1} > 2^{-n-3} \]
by \eqref{d1}.
Since $u_i \in Z_i \cap B(u,2^{-n+M_2+2}) \subset B(x,2^{-n+M_2+3})$ for $i=1,2$, we deduce by (H1) (which is not proved yet)
that there exists $\{y_1,y_2\} \in \E_{n+N_1}^0$ such that
$d(y_1,y_2) > 2^{-n-3}$
and $d(x,y_1) < 2^{-n+M_2+3}$.

We now assume that there exist
$v_1,v_2 \in Z$
such that $xv_1w_1$ and $xv_2w_2$. Recall that we still assume $\{w_1,w_2\} = N_n^k(y) \subset B(x,2^{-n+M_2})$ and $w_1\neq w_2$.
We may also suppose there exists $v_3 \in Z\backslash B(x,2^{-n+M_2})$. Namely, if such $v_3$ does not exist then an endpoint of $\Gamma_{n+N_1}^0$ or a relatively
long edge of $\Gamma_{n+N_1}^0$ lies close to $x$ as above, which is enough for us for now.

Let us choose $v_3w_1x$.
Denote $\A(z) = \A_{m_n(z)}(q_{m_n(z)}^{-1}(z))$ and
$B_i = B_{n+N_1}(v_i)$
for $z=x,w_1,w_2$ and $i=1,2,3$.
Choosing $R_1$ big enough depending on $M_2$, and $r_1$ small enough depending on $M_2$ and $N_1$, we have
\[ B_i \times B_j \subset \A(x) \cap \A(w_1) \cap \A(w_2) \]
for distinct $i,j\in\{1,2,3\}$.
Fix $\lambda > 0$ and let $\G = \G(x) \cup \G(w_1) \cup \G(w_2)$, where
\begin{align*}
  \G(z) &= \biggl\{\, (\zeta,\xi)\in\A(z)\, :\, c(\zeta,\xi,z)^2 \geq
  G\dashint_{\A(z)} c(z_1,z_2,z)^2\, d\m^2(z_1,z_2) + \lambda\, \biggr\}.
\end{align*}
Here $G$ is a large constant depending on $C_0$, $\delta$, $R_1$, $N_1$ and $N_0$.
By the Tchebychev inequality
\begin{align}
\label{tch}
\begin{split}
  \m^2(\G) &\leq \m^2(\G(x)) + \m^2(\G(w_1)) + \m^2(\G(w_2))  \\
  &\leq \frac{1}{G}\left( \m^2(\A(x)) + \m^2(\A(w_1)) + \m^2(\A(w_2)) \right)  \\
  &\leq \frac{C_0^2R_1^2\left( 2^{-2} + 2^{2M_2+1} \right)}{G4^n}.
\end{split}
\end{align}
Denote
$U_i = \{ v\in B_1\, :\, \{v\}\times B_i\subset\G\}$ for $i=2,3$.
We next show that there exists $(u_1,u_2,u_3) \in B_1\times B_2\times B_3$
such that $(u_1,u_2)\not\in\G$ and $(u_1,u_3)\not\in\G$.
Suppose this is false. Then $B_1 = U_2 \cup U_3$.
Letting
\[ p(z) = G\dashint_{\A(z)} c(z_1,z_2,z)^2\, d\m^2(z_1,z_2) + \lambda\]
we have for each $i \in \{2,3\}$ and $z \in \{x,w_1,w_2\}$
\begin{align*}
  \left\{\, v\in B_1\, :\, \{v\}\times B_i\subset\G(z)\, \right\}
  = \left\{\, v\in B_1\, :\, \text{$c(v,\xi,z)^2 \geq p(z)$ for all $\xi \in B_i$}\, \right\},
\end{align*}
which is a closed set (in $X$).
Thus $U_2$ and $U_3$ are closed and we get
\begin{align*}
  \m^2(\G) &\geq \m^2(U_2\times B_2) + \m^2(U_3\times B_3)  \\
  &= \m(U_2)\m(B_2) + \m(U_3)\m(B_3)  \\
  &\geq \left(\m(U_2)+\m(U_3)\right)\min\left\{\m(B_2),\m(B_3)\right\}  \\
  &\geq \m(B_1)\min\left\{\m(B_2),\m(B_3)\right\}
  \geq \delta^24^{-n-N_1-N_0},
\end{align*}
which contradicts \eqref{tch} provided that $G$ has been chosen big enough.

As in the proof of Lemma~\ref{leordom} we easily see that $\{x,w_1,w_2\} \cup V \in \oom{\phi_1}$
for $V = \{v_1,v_2,v_3\}$, $\{u_1,u_2,u_3\}$ if we choose $\phi_1$ big enough
depending on $M_2$, $N_0$ big enough
depending on $M_2$, $N_1$ and $\phi_1$, $K$ big enough depending on $M_2$ and $N_1$, and $\varepsilon_1$ small enough
depending on $\phi_1$, $M_2$, $N_0$, $N_1$ and $\delta$. Using the assumptions
$xv_1w_1$, $xv_2w_2$ and $v_3w_1x$ we deduce by Lemma~\ref{lemove} that $u_3w_1u_1xu_2w_2$.
Letting
\[ \varphi = -\cos\min\{\sphericalangle u_1xu_2,\sphericalangle u_3w_1u_1,\sphericalangle u_1u_2w_2,\sphericalangle u_3u_1w_2\} \]
we have
\begin{align*}
  d(w_1,w_2) &\geq d(u_3,w_2) - d(u_3,w_1)  \\
  &\geq \varphi d(u_3,u_1) + d(u_1,w_2) - d(u_3,w_1)  \\
  &\geq \varphi(\varphi d(u_3,w_1)+d(w_1,u_1)) + d(u_1,u_2) + \varphi d(u_2,w_2) - d(u_3,w_1)  \\
  &\geq \varphi(\varphi d(u_3,w_1)+d(w_1,u_1)) + d(u_1,x) + \varphi d(x,u_2) + \varphi d(u_2,w_2) - d(u_3,w_1)  \\
  &\geq \varphi(d(w_1,u_1)+d(u_1,x)+d(x,u_2)+d(u_2,w_2)) + (\varphi^2-1)d(u_3,w_1)  \\
  &\geq \varphi(d(w_1,x)+d(x,w_2)) + (\varphi^2-1)d(u_3,w_1).
\end{align*}
Denote $\lambda_1=c(x,u_1,u_2)d(u_1,u_2)$, $\lambda_2=c(w_1,u_1,u_3)d(u_1,u_3)$, $\lambda_3=c(w_2,u_1,u_2)d(u_1,w_2)$ and
$\lambda_4=c(w_2,u_1,u_3)d(u_3,w_2)$.
Since $4(1-\varphi^2) = \max\{\lambda_1,\lambda_2,\lambda_3,\lambda_4\}^2$, $(u_1,u_2)\not\in\G$, $(u_1,u_3)\not\in\G$
and $\lambda > 0$ is arbitrary, we further get by \eqref{kaava1}
\begin{equation}
\label{intest1}
\begin{split}
  l(\Gamma_n^{k+1})-l(\Gamma_n^k) &\leq d(w_1,x)+d(x,w_2)-d(w_1,w_2)  \\
  &\leq (1-\varphi)(d(w_1,x)+d(x,w_2)) + (1-\varphi^2)d(u_3,w_1)  \\
  &\leq (1-\varphi^2)(d(w_1,x)+d(x,w_2)+d(u_3,w_1))  \\
  &\leq C_2\int\limits_{B(x,R_22^{-n})}\int\limits_{A_n(z_3)}\int\limits_{A_n(z_3)\backslash B(z_2,r_12^{-n})} c(z_1,z_2,z_3)^2\, d\m z_1\, d\m z_2\, d\m z_3,
\end{split}
\end{equation}
where
$A_n(z) = B(z,R_22^{-n}) \backslash B(z,r_12^{-n})$, $R_2$ depends on $M_2$, and $C_2$ depends on $G$ and $\delta$.

Let now $k\in\{j_{n+1},\dotsc,k_{n+1}-1\}$ and assume by induction that we have
constructed a curve $\Gamma_n^k$ and the following hypothesis is satisfied:
\begin{itemize}
  \item[(H3)] If $z \in X_n^{j_{n+1}}$, $B(z,(2^{M_1}-3)2^{-n}) \cap X_n^k = \{z_1,\dotsc,z_m\}$
  and $z_1z_2\dotsc z_m$, then
  $\{ z_i,z_{i+1} \} \in \E_n^k$ for all $i \in \{1,\dotsc,m-1\}$.
\end{itemize}
Clearly (H2) implies (H3) if $k=j_{n+1}$.
We again denote $x=x_{n+1}^{k+1}$ and let $y\in X_n^k$ be such that $d(x,y) = d(x,X_n^k)$.
Denote $N'(y) = \{\, w \in N_n^k(y)\, :\, yxw\, \}$.
If $N'(y) \neq \emptyset$,
we choose $w \in N'(y)$ such that $d(y,w) = d(y,N'(y))$.
If $N'(y) = \emptyset$ and $\#N_n^k(y) = 2$, we choose $w \in N_n^k(y)$ such that $d(y,w) = \max\{\, d(y,z)\, :\, z \in N_n^k(y)\, \}$.
Then
we set
\[ \E_n^{k+1} = \left(\E_n^k \backslash \{y,w\}\right) \cup \{ \{y,x\},\{x,w\} \}. \]
If $N'(y) = \emptyset$ and $\#N_n^k(y) \leq 1$, we put
\[ \E_n^{k+1} = \E_n^k \cup \{ \{y,x\} \}. \]
Let us note that the construction does not depend on choice of $y$ by \eqref{d3}, Lemma~\ref{leordom} and (H3).

If $\#N_n^k(y) \leq 1$ and $N'(y) = \emptyset$ we will use the simple estimate
\begin{equation}
\label{est21}
  l(\Gamma_n^{k+1})-l(\Gamma_n^k) \leq d(x,y) \leq (1 + 2^{-N_0+1})2^{-n},
\end{equation}
which comes from \eqref{d3}.

We next assume that $yxw$ for some $w \in B(y,2^{-n+M_2}) \cap N_n^k(y)$.
Choosing $M_1$ large enough depending on $M_2$ and using \eqref{d3}, Lemma~\ref{leordom} and (H3) we see
that this is the case if
$\#N_n^k(y) = 2$ and $N_n^k(y) \subset B(y,2^{-n+M_2})$.
As before, 
an endpoint of $\Gamma_{n+N_1}^0$ lies in $B(x,2^{-n+M_2+2})$,
or there exists $\{y_1,y_2\} \in \E_{n+N_1}^0$ such that
$d(y_1,y_2) > 2^{-n-3}$ and $d(x,y_1) < 2^{-n+M_2+3}$,
or
\begin{equation}
\label{intest2}
  l(\Gamma_n^{k+1})-l(\Gamma_n^k)
  \leq C_2\int\limits_{B(x,R_22^{-n})}\int\limits_{A_n(z_3)}\int\limits_{A_n(z_3)\backslash B(z_2,r_12^{-n})} c(z_1,z_2,z_3)^2\, d\m z_1\, d\m z_2\, d\m z_3
\end{equation}
as in \eqref{intest1}.

Let us next show that (H3) remains valid when we replace $k$ by $k+1$.
Assume by induction that also the following condition is satisfied for any $z \in X_n^{j_{n+1}}$.
For $k=j_{n+1}$ this follows directly from (H2).
\begin{itemize}
  \item[(*)] If $w_1 \in B(z,(2^{M_1}-1)2^{-n}) \cap X_n^{j_{n+1}}$,
  $\{w_2,\dotsc,w_{p-1}\} = \{\, w \in X_n^k\, :\, w_1wz\, \}$,
  $w_p = z$ and $w_1w_2\dotsc w_p$, then $\{ w_i,w_{i+1} \} \in \E_n^k$
  for all $i \in \{1,\dotsc,p-1\}$.
\end{itemize}
We very first show that we can replace $k$ by $k+1$ in (*).
Let $z \in X_n^{j_{n+1}}$ and $\{w_1,\dotsc,w_p\}$ be as in the hypothesis of (*).
Clearly we can assume that $w_1 \neq z$.
We first notice that $w_1xz$ implies $y \in \{w_1,\dotsc,w_p\}$.
Namely, $w_1xz$ implies $d(x,z) \leq 2^{-n+M_1}$ and further
$d(x,w) > 2^{-n+M_1}$ for all $w \in X_n^k \backslash B(z,2^{-n+M_1+1})$.
Since $B(z,2^{-n+M_1+1}) \cap X_n^{k+1}$ is orderable by Lemma~\ref{leordom}, we get the conclusion.
Thus we can assume that $y \in \{w_1,\dotsc,w_p\}$. Since
\begin{equation}
\label{dxy}
  d(x,y) \leq d(x,X_n^{j_{n+1}}) \leq (1 + 2^{-N_0+1} + \vartheta)2^{-n} < 2^{-n+1}
\end{equation}
by \eqref{d3}, the set
$\{x,w_1,\dotsc,w_p\}$ is orderable by Lemma~\ref{leordom}.
If $y \in \{w_2,\dotsc,w_{p-1}\}$ then (*) is clearly
valid also for $k+1$ by the construction.
Assume now that $y=w_1$. 
By Lemma~\ref{leordom} there cannot exist $w \in X_n^k$ such that $yww_2$.
If $yxw_2$ and $\{\{y,x\},\{x,w_2\}\} \not\subset \E_n^{k+1}$, then
by the construction there is $w \in N_n^k(y) \backslash \{w_2\}$ such that $wxy$ and
$d(w,y) \leq d(y,w_2)$.
Thus the quadruple $\{w,y,x,w_2\} \subset B(y,2^{-n+M_1}) \cap X_n^{k+1}$
is not orderable contradicting Lemma~\ref{leordom}.
If $xyw_2$ and $\{y,w_2\} \not\in \E_n^{k+1}$, then
by the construction there is $w \in N_n^k(y) \backslash \{w_2\}$ such that $wyx$ and
$d(w,y) \leq d(y,w_2)$.
Thus again $\{w,x,y,w_2\}$ is not orderable.
This shows that (*) holds for $k+1$.
The case $y=z$ is treated similarly by replacing $w_2$ by $w_{p-1}$.

Let us now show that (H3) still holds if we replace $k$ by $k+1$.
If $d(y,z) \leq (2^{M_1}-3)2^{-n}$ this can be seen similarly as above.
We can clearly assume that
$B(z,(2^{M_1}-3)2^{-n}) \cap \{x,y\} \neq \emptyset$.
Let $v\in X_n^{j_{n+1}}$ be such that $d(x,v) = d(x,X_n)$.
Then $x,y,v \in B(z,(2^{M_1}-1)2^{-n})$ and $x,y \in B(v,2^{-n+2})$ by \eqref{dxy}.
So we only need consider the case $y \not\in Z$, $x \in B(z,(2^{M_1}-3)2^{-n})$
and $v \neq z \neq y$.
Here we let $Z = \{z_1,\dotsc,z_m\}$ be as in the hypothesis of (H3).
Now there do not exist $z',z'' \in Z$ such that $z'xz''$ because
$B(z,2^{-n+M_1}) \cap X_n^{k+1}$ is orderable and $y \not\in Z$.
Let us choose $yxz_1z_2\dotsc z_m$.

If $v \neq y$ then we must have $yxvz$ or $vyxz$.
If $yxvz$ then $y,z_1 \in B(v,2^{-n+2})$ and we have $\{y,z_1\} \in \E_n^k$ by (H3) for $v$
(and Lemma~\ref{leordom}).
If $vyxz$ or $v=y$ then $\{y,z_1\} \in \E_n^k$ by (*).
So in any case $\{y,z_1\} \in \E_n^k$.
If $\{x,z_1\} \not\in \E_n^{k+1}$ then
there is $w \in N_n^k(y) \backslash \{z_1\}$ such that $wxy$ and
$d(w,y) \leq d(y,z_1)$.
Since $xwz_1$ is not possible we must have $wyz_1$.
Thus the quadruple $\{w,y,x,z_1\} \subset B(y,2^{-n+M_1}) \cap X_n^{k+1}$
is contradictingly not orderable and we get (H3) for $k+1$.


Now $\Gamma_{n+1}^0$ is obtained simply by removing $D_n \backslash X_{n+1}$ from $X_n^{k_{n+1}}$
so that the order of the points in $X_{n+1}$ does not change.
Precisely, denote $D_n \backslash X_{n+1} = \{x_1,\dotsc,x_m\}$ and set inductively $X_n^{k_{n+1}+i} = X_n^{k_{n+1}+i-1} \backslash \{x_i\}$ for $i=1,\dotsc,m$.
If $\#N_n^{k_{n+1}+i-1}(x_i) = 2$ we set
\[ \E_n^{k_{n+1}+i} = \left(\E_n^{k_{n+1}+i-1} \backslash \{\, \{x_i,w_1\}\{x_i,w_2\}\, \} \right) \cup \{\, \{w_1,w_2\}\, \}, \]
where $\{w_1,w_2\} = N_n^{k_{n+1}+i-1}(x_i)$.
If $\#N_n^{k_{n+1}+i-1}(x_i) = 1$ we set
\[ \E_n^{k_{n+1}+i} = \E_n^{k_{n+1}+i-1} \backslash \{\, \{x_i,w\}\, \}, \]
where $w \in N_n^{k_{n+1}+i-1}(x_i)$.
Finally we put $\E_n^0 = \E_n^{k_{n+1}+m}$.

By induction (H3) holds for all $k \in \{j_{n+1},\dotsc,k_{n+1}\}$. 
Clearly we can also replace $X_n^{k_{n+1}}$ by $X_{n+1}$.
For $z \in X_{n+1}$ there is $y \in X_n^{j_{n+1}}$ such that
$d(z,y) < 2^{-n+1}$ by \eqref{dxy}.
Thus, since $M_1$ is chosen to be a large constant, $B(z,2^{-n-1+M_1}) \subset B(y,(2^{M_1}-3)2^{-n})$.
So we have (H1) for $n+1$, and by induction (H1) and the following simple variant of (H3) hold for each integers
$n \geq n_0$ and $j_{n+1} \leq k \leq k_{n+1}$.
\begin{itemize}
  \item[(H4)]
  If $z \in X_n^k$, $B(z,2^{-n+M_1-1}) \cap X_n^k = \{z_1,\dotsc,z_m\}$
  and $z_1z_2\dotsc z_m$, then
  $\{ z_i,z_{i+1} \} \in \E_n^k$ for all $i \in \{1,\dotsc,m-1\}$.
\end{itemize}

\begin{lemma}
\label{lelong1}
  Let $n > n_0$ and $j_n < k < m \leq k_n$.
  Assume that $d(x_n^m,x_n^k) = d(x_n^m,X_{n-1}^{m-1})$.
  Then there is (unique) $z \in B(x_n^k,2^{-n+3}) \cap N_{n-1}^{m-1}(x_n^k)$ such that $x_n^kx_n^mz$.
\end{lemma}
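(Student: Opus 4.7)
The plan is to exploit the orderability of a suitable local set around $x_n^k$ (via Lemma~\ref{leordom}) to locate $x_n^m$ in that order, and then read off $z$ as the immediate order-successor of $x_n^m$ using (H4). First I would verify that $d(x_n^k, x_n^m) < 2^{-n+3}$: since $x_n^m \in D_n$, \eqref{d3} gives some $y \in D_{n-1}$ with $d(x_n^m, y) < 2^{-n+2}$, and either $y$ survived phase~1 so that $y \in X_{n-1}^{m-1}$, or $y = p(x_n^i)$ for some $i \leq j_n$ and then $x_n^i \in X_{n-1}^{j_n} \subset X_{n-1}^{m-1}$ lies within $\vartheta 2^{-n+1}$ of $y$. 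Either way the nearest-point hypothesis gives the bound.

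Next I would apply Lemma~\ref{leordom} with $n$ replaced by $n-1$ and $k$ replaced by $m$ to conclude that $Z := B(x_n^k, 2^{-n+M_1+2}) \cap X_{n-1}^m$ is orderable; the previous bound places $x_n^m \in Z$. Fix an order $o$ on $Z$. The nearest-point hypothesis then forces $x_n^m$ to be immediately adjacent to $x_n^k$ in $o$: any point $v \in X_{n-1}^{m-1} \cap Z$ strictly between them would satisfy $d(v, x_n^m) < d(x_n^k, x_n^m)$ by definition of the order, contradicting the hypothesis. Orient so that $x_n^k$ precedes $x_n^m$ in $o$, and define $z$ as the immediate $o$-successor of $x_n^m$ restricted to $X_{n-1}^{m-1}$; equivalently, since $x_n^m \notin X_{n-1}^{m-1}$, this is the immediate $o$-successor of $x_n^k$ in $X_{n-1}^{m-1}$. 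The order immediately yields $x_n^k x_n^m z$; (H4) applied with indices $(n-1, m-1)$ yields $\{x_n^k, z\} \in \E_{n-1}^{m-1}$; uniqueness of $z$ is a consequence of orderability of $Z$.

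The main obstacle is establishing the existence of this successor $z$ within $B(x_n^k, 2^{-n+3})$: a priori, $x_n^k$ could be an $o$-endpoint of $X_{n-1}^{m-1}$ on the $x_n^m$-side, in which case no candidate $z$ exists. I would rule this out by exhibiting a point $y' \in X_{n-1}^{m-1}$ within $B(x_n^k, 2^{-n+3})$ lying on the $x_n^m$-side of $x_n^k$ in $o$. The construction uses two $D_{n-1}$-points, one near $x_n^k$ and one near $x_n^m$, together with the fact that $x_n^k \in D_n \setminus D_n^*$ implies $d(x_n^k, D_{n-1}) > \vartheta 2^{-n+1}$; this forces $x_n^k$ to lie strictly between two distinct $D_{n-1}$-points in the locally one-dimensional structure, and after phase-1 replacement at least one of these (or its replacement $x_n^i$) ends up on the $x_n^m$-side of $x_n^k$ in $o$. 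This positioning argument, which requires checking orderings of several small quadruples via Lemma~\ref{lemove}, is the technically most delicate step.
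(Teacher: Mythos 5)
Your overall framing matches the paper's: order a local patch of $X_{n-1}^m$ via Lemma~\ref{leordom}, use the nearest-point hypothesis to place $x_n^m$ immediately adjacent to $x_n^k$ in that order, and invoke (H4) to convert order-adjacency in $X_{n-1}^{m-1}$ into membership in $\E_{n-1}^{m-1}$. You also correctly flag the crux --- producing a point of $X_{n-1}^{m-1}$ inside $B(x_n^k,2^{-n+3})$ on the $x_n^m$-side of $x_n^k$ --- but there the argument stops, and the sketch you offer is not sound. The assertion that $x_n^k\in D_n\setminus D_n^*$ ``forces $x_n^k$ to lie strictly between two distinct $D_{n-1}$-points'' does not follow from what you cite: the bounds $\vartheta 2^{-n+1}<d(x_n^k,D_{n-1})<2^{-n+2}$ (the upper one from \eqref{d3}) only guarantee one nearby $D_{n-1}$-point at a definite distance, not one on each side, and a priori $x_n^k$ could be an order-extreme point of the local structure with $x_n^m$ sitting past it, in which case your $y'$ does not exist.

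What actually forbids that configuration is the maximality property \eqref{xn}, which you never invoke. Since $x_n^k$ maximizes $d(\cdot,X_{n-1}^{k-1})$ over $X_n$, the nearest points $y_k,y_m\in X_{n-1}^{k-1}$ to $x_n^k,x_n^m$ satisfy $d(x_n^m,y_m)\leq d(x_n^k,y_k)<2^{-n+2}$. With $\{x_n^k,x_n^m,y_k,y_m\}$ orderable by Lemma~\ref{leordom}, this inequality, together with the minimality defining $y_k,y_m$ and the lemma's nearest-point hypothesis on $x_n^k$, rules out every order except $y_m x_n^m x_n^k y_k$ (or $x_n^k x_n^m y_k$ when $y_k=y_m$). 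Thus $y_m\in X_{n-1}^{k-1}\subset X_{n-1}^{m-1}$ is precisely the witness on the $x_n^m$-side of $x_n^k$ that you were seeking, and $z$ is then read off via (H4) as you intend, with $d(x_n^k,z)\le d(x_n^k,y_m)<2^{-n+3}$. Without \eqref{xn} there is no way to manufacture the witness, so the existence step in your proposal is a genuine gap.
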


\begin{proof}
Let $y_i \in X_{n-1}^{k-1}$ be such that $d(x_n^i,y_i) = d(x_n^i,X_{n-1}^{k-1})$ for $i=k,m$.
Now $d(x_n^m,y_m) \leq d(x_n^k,y_k) < 2^{-n+2}$ by \eqref{xn} and \eqref{dxy}.
The set $\{x_n^k,x_n^m,y_k,y_m\}$ is orderable by Lemma~\ref{leordom}.
By the assumption either $y_mx_n^mx_n^ky_k$ or $x_n^kx_n^my_k$.
Thus the claim follows from Lemma~\ref{leordom} and (H4).
\end{proof}

\begin{lemma}
\label{lelong2}
  Let $m > n \geq n_0$ and $k \in \{0,\dotsc,k_{n+1}\}$.
  Assume that $\{x_1,x_2\} \in \E_n^k$ and $d(x_1,x_2) \geq 2^{-n+6}$.
  Then there is $\{y_1,y_2\} \in \E_m^0$ such that $d(x_i,y_i) < 2^{-n+6}$ for $i=1,2$.
\end{lemma}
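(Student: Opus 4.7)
My plan is to prove the lemma by induction on $m$, tracking a specific edge $\{y_1,y_2\}$ through the sequence of micro-operations that transform $\E_n^k$ into $\E_m^0$ and bounding the accumulated drift $d(x_1,y_1) + d(x_2,y_2)$. The base case is $\{y_1,y_2\} = \{x_1,x_2\}$ at $(n,k)$, with zero drift.

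For the inductive step, I analyze each micro-operation: the stage transitions $\E_m^{k'} \to \E_m^{k'+1}$ (replacements for $k' < j_{m+1}$ and splits/extensions for $k' \geq j_{m+1}$) together with the deletion sub-stages that produce $\E_{m+1}^0$ from $\E_m^{k_{m+1}}$. If the tracked edge is untouched, I carry it forward. If an endpoint $y_i$ is replaced by the new vertex $x = x_{m+1}^{k'+1}$ in the first phase, the tracked endpoint is updated to $x$, with incremental drift $d(y_i,x) \leq \vartheta\,2^{-m}$ from the definition of $D_m^*$. If the tracked edge is split by a new vertex $x$ inserted between $y_1$ and $y_2$ in the induced order, I retain the sub-edge whose outer endpoint is fixed and update the inner endpoint to $x$; the drift increment is at most $d(y_j,x) < 2^{-m+1}$ via \eqref{dxy}, where $y_j$ is the endpoint on the split side. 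If an endpoint $y_i \in D_m \setminus X_{m+1}$ is deleted, the new edge is $\{y_j,w\}$, where $w$ is the other curve-neighbor of $y_i$, and the drift on the $i$-side becomes $d(y_i,w)$.

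The cumulative drift is then controlled by the observation that modifications at level $m$ act only at scale $2^{-m}$: each individual replacement, split, or merge moves an endpoint by $O(2^{-m})$, and within a single level the number of effective modifications of one tracked endpoint is bounded, since once it has drifted past the local scale, subsequent insertions either miss the tracked edge or shrink it on the other side. Summing over $m \geq n$ gives a geometric series bounded by $\sum_{m \geq n} O(2^{-m}) = O(2^{-n})$, which fits comfortably within the allowed $2^{-n+6}$ once the constants $N_0$, $\vartheta$, $M_1$ chosen in the earlier lemmas are tracked carefully.

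The main obstacle is the deletion sub-step: I must show that whenever a tracked endpoint $y_i$ is removed, its other curve-neighbor $w$ satisfies $d(y_i,w) = O(2^{-m})$, so that the tracked edge cannot inadvertently jump across $X$ to an unrelated part of the curve. For this I plan to invoke Lemma~\ref{lelong1}, which forces each newly inserted vertex to sit next to a short neighbor (within $2^{-m+3}$ at its own scale) in the induced order, so the short edges incident to $y_i$ in $\E_m^{k_{m+1}+i-1}$ live at scale $2^{-m}$. Combined with the orderability of local configurations from Lemma~\ref{leordom} and the inductively preserved hypothesis (H4), this should pin $d(y_i,w)$ to the current scale and close the bookkeeping.
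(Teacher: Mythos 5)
Your overall strategy---induction on levels, bound the per-level endpoint drift by $O(2^{-m})$, sum the geometric series---is the same as the paper's, and you correctly identify the three types of modification and flag the deletion sub-step as the delicate point. However, there is a genuine gap: you never maintain a lower bound on the \emph{length} of the tracked edge, and the paper's argument hinges on precisely such an invariant. The paper shows that if the tracked edge $\{z_1,z_2\}$ at level $p$ has $d(z_1,z_2)\geq 2^{-p+6}$, then after passing to $\E_{p+1}^0$ the resulting edge $\{y_1,y_2\}$ still satisfies $d(y_1,y_2)\geq 2^{-p+5}=2^{-(p+1)+6}$; it verifies this by splitting into the case $d(z_1,z_2)\geq 2^{-p+M_1-1}$, where the loss is at most $2^{-p+6}$, and the case $d(z_1,z_2)\leq 2^{-p+M_1-1}$, where (H4) forces $d(y_1,y_2)\geq d(w_1,w_2)$. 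Without this invariant, two things break: first, the within-level drift bound $d(z_i,w_i)<2^{-p+1}$ (which you need to keep the series geometric) is established by the paper only for edges with $d(z_1,z_2)\geq 2^{-p+4}$, so once you lose the length bound, you lose the drift bound; second, a short tracked edge can be absorbed or reoriented in ways that your ``bounded number of modifications per level'' heuristic cannot exclude. Your sentence that ``once it has drifted past the local scale, subsequent insertions either miss the tracked edge or shrink it on the other side'' is not a substitute for this bookkeeping and, as written, is not true in general.

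A second, more minor issue is where the individual lemmas do their work. You propose to invoke Lemma~\ref{lelong1} to control the deletion sub-step (passing from $\E_m^{k_{m+1}}$ to $\E_{m+1}^0$); in the paper, Lemma~\ref{lelong1} is used for the \emph{within-level} phase (from $\E_p^l$ to $\E_p^{k_{p+1}}$), where it guarantees that second-phase insertions that would split an edge only split short ones. The deletion sub-step is controlled instead by Lemma~\ref{le2} (each point of $X_p$ has a point of $X_{p+1}$ within $2^{-p+5}$) combined with (H4); Lemma~\ref{le2} is the ingredient that bounds how far the surviving neighbour of a deleted chain can drift, and it is absent from your outline. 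Adding the edge-length invariant and swapping in Lemma~\ref{le2} at the deletion step would align your proposal with the paper's argument.
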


\begin{proof}
For any $p \geq n_0$, $l \in \{0,\dotsc,k_{p+1}\}$ and $\{z_1,z_2\} \in \E_p^l$ with $d(z_1,z_2) \geq 2^{-p+4}$
there is $\{w_1,w_2\} \in \E_p^{k_{p+1}}$ such that
$z_i = w_i$ or $w_i \in D_{p+1}$ with
$d(z_i,w_i) < 2^{-p+1}$ for $i=1,2$.
This follows from Lemma~\ref{lelong1} and the construction.
If $d(z_1,z_2) \geq 2^{-p+6}$ then by the construction, Lemma~\ref{le2} and (H4) we find
$\{y_1,y_2\} \in \E_{p+1}^0$ such that $d(z_i,y_i) < 2^{-p+5}$ and $d(y_1,y_2) \geq d(z_1,z_2) - 2^{-p+6}$  for $i=1,2$. If $d(z_1,z_2) \leq 2^{-p+M_1-1}$ then $d(y_1,y_2) \geq d(w_1,w_2)$ by (H4).
Thus $d(y_1,y_2) \geq 2^{-p+5}$ by choosing $M_1 \geq 8$ and the claim follows by induction.
\end{proof}

Let $N_2$ be a large integer. By choosing $M_1 \geq N_2 $ we have $X = B(z,2^{-N_2-n_0+M_1})$ for $z \in X_{N_2+n_0}$,
and thus Lemma~\ref{leordom} and (H1) imply
\begin{align}
\label{lest0}
  l(\Gamma_{N_2+n_0}^0) \leq d(X)/\phi_1.
\end{align}
For $n \geq N_2+n_0$, $k \in \{1,\dotsc,k_{n+1}\}$ we let $p_n^k \in X_{n-1}^{k-1}$ be such that
$d(x_n^k,p_n^k) = d(x_n^k,X_{n-1}^{k-1})$.
Let now $m > N_2+n_0$.

Denote
\begin{align*}
\Lambda_n^1 &= \{\, k \in \{1,\dotsc,j_n\}\, :\, \text{$\#N_{n-1}^{k-1}(p_n^k) = 2$ and $N_{n-1}^{k-1}(p_n^k) \subset B(x_n^k,2^{-n+M_2+1})$}\, \}, \\
\Lambda_n^2 &= \{\, k \in \{j_n+1,\dotsc,k_n\}\, :\, \text{$p_n^kx_n^kw$ for $w \in B(p_n^k,2^{-n+M_2+1}) \cap N_{n-1}^{k-1}(p_n^k)$}\, \}.
\end{align*}
By (H4) (and \eqref{d3})
\begin{align*}
  \sum_{k \in \Lambda_{n+1}(z,M)} \left(l(\Gamma_n^k) - l(\Gamma_n^{k-1}) \right)
  \leq (\phi_1^{-1}-1)(M+2^{M_2+1})2^{-n}
\end{align*}
for any $n < m$, $z \in X_m$ and $M \leq 2^{M_1 - 2}$, where
\[ \Lambda_n(z,M) = \{\, k \in \Lambda_n^1 \cup \Lambda_n^2\, :\, d(x_n^k,z) \leq M2^{-n}\, \}. \]
Here we use $M_1 \geq M_2 + 6$.
Using this, Lemma~\ref{lelong2}, \eqref{d1}, \eqref{intest1} and \eqref{intest2} we get
\begin{align}
\label{lest12}
\begin{split}
  &\sum_{n=N_2+n_0}^{m-1} \sum_{k\in\Lambda_{n+1}^1 \cup \Lambda_{n+1}^2} \left(l(\Gamma_n^k)-l(\Gamma_n^{k-1})\right) \\
  &\leq \sum_{n=N_2+n_0}^{m-1} (\phi_1^{-1}-1)(2^{M_2+2}+2^{M_2+1})2^{-n+1} + (\lambda_1 + \lambda_2)l(\Gamma_m^0) \\
  &+ C_2\sum_{n=N_2+n_0}^{m-1} \sum_{x \in D_{n+1}} \int\limits_{B(x,R_22^{-n})}\int\limits_{A_n(z_3)}\int\limits_{A_n(z_3)\backslash B(z_2,r_12^{-n})} c(z_1,z_2,z_3)^2\, d\m z_1\, d\m z_2\, d\m z_3,
\end{split}
\end{align}
where
\begin{align*}
  \lambda_1 &= \frac{4(\phi_1^{-1}-1)(2^{M_2+3}+2^{-N_1+6}+2^{M_2+1})}{2^{-3} - 2^{-N_1+7}}, \\
  \lambda_2 &= \sum_{n=m-N_1+1}^{m-1} (\phi_1^{-1}-1)(32+2^{M_2+1})2^{-n+m+2} < (\phi_1^{-1}-1)(32+2^{M_2+1})2^{N_1+2}.
\end{align*}
By Lemma~\ref{leordom} and $\eqref{d1}$ we have
\begin{equation}
\label{lest13}
  \#(B(z,M2^{-n}) \cap D_{n+1}) \leq 8M\phi_1^{-1} + 1
\end{equation}
for any $n \geq n_0$, $z \in X$ and $M \leq 2^{M_1+1}$.
Furthermore, if $i < j$ and $A_i(z) \cap A_j(z) \neq \emptyset$, then $r_12^{-i} < R_22^{-j}$ which gives
$j - i < (\log R_2 - \log r_1)/\log 2$. Thus by choosing $2^{M_1+1} \geq R_2$ and $K \geq R_2/R_1 + 1$
\begin{align*}
  &\sum_{n=N_2+n_0}^{m-1} \sum_{x \in D_{n+1}} \int_{B(x,R_22^{-n})}\int_{A_n(z_3)}\int_{A_n(z_3)\backslash B(z_2,r_12^{-n})} c(z_1,z_2,z_3)^2\, d\m z_1\, d\m z_2\, d\m z_3 \\
  &\leq C_3 \int_X \sum_{n=N_2+n_0}^{m-1}
  \int_{A_n(z_3)}\int_{\mathcal{T}_K(X)_{(z_2,z_3)}} c(z_1,z_2,z_3)^2\, d\m z_1\, d\m z_2\, d\m z_3  \\
  &\leq C_3C_4 \int_X\int_X\int_{\mathcal{T}_K(X)_{(z_2,z_3)}} c(z_1,z_2,z_3)^2\, d\m z_1\, d\m z_2\, d\m z_3  \\
  &= C_3C_4 c^2_K(X,\m),
\end{align*}
where $C_3 = 8R_2\phi_1^{-1} + 1$ and $C_4 = (\log R_2 - \log r_1)/\log 2$.
Thus by \eqref{lest12} and (iii)
\begin{align}
\label{lest1}
\begin{split}
  &\sum_{n=N_2+n_0}^{m-1} \sum_{k\in\Lambda_{n+1}^1 \cup \Lambda_{n+1}^2} \left(l(\Gamma_n^k)-l(\Gamma_n^{k-1})\right) \\
  &< (\phi_1^{-1}-1)2^{M_2+5-N_2-n_0} + (\lambda_1 + \lambda_2)l(\Gamma_m^0) + C_2C_3C_4\varepsilon_0d(X).
\end{split}
\end{align}

By \eqref{est11}, \eqref{est21} and Lemma~\ref{lelong1}
\begin{align}
\label{lest3}
  \sum_{n=N_2+n_0}^{m-1} \sum_{k \in \Lambda_{n+1}^3} \left(l(\Gamma_n^k) - l(\Gamma_n^{k-1}) \right)
  \leq \sum_{n=N_2+n_0}^\infty  (\vartheta + 1 + 2^{-N_0+1})2^{-n+1} < 2^{-N_2-n_0+3},
\end{align}
where
$\Lambda_n^3 = \{\, k \in \{1,\dotsc,k_n\} \backslash \Lambda_n^2\, :\, \#N_{n-1}^{k-1}(p_n^k) \leq 1\, \}$.
Further by \eqref{d3}, \eqref{lest13} and Lemma~\ref{lelong2}
\begin{align}
\label{lest4}
  \sum_{n=N_2+n_0}^{m-1} \sum_{k \in \Lambda_{n+1}^4} \left(l(\Gamma_n^k) - l(\Gamma_n^{k-1}) \right)
  \leq \frac{16(528\phi_1^{-1} + 1)l(\Gamma_m^0)}{2^{M_2} - 128},
\end{align}
where
$\Lambda_n^4 = \{\, k \in \{1,\dotsc,k_n\} \backslash \Lambda_n^2\, :\, N_{n-1}^{k-1}(p_n^k) \not\subset B(p_n^k,2^{-n+M_2+1})\, \}$

Choosing $M_2$ large enough depending on $\tau_0$, $\phi_1 < 1$ large enough depending on $M_2$ and $\tau_0$,
$N_2$ large enough depending on $M_2$ and $\tau_0$, and $\varepsilon_0$ small enoug depending on
$C_2C_3C_4$ and $\tau_0$,
\begin{align}
\label{lGammam}
  l(\Gamma_m^0) \leq (1 + \tau_0)d(X)
\end{align}
by \eqref{lest0}, \eqref{lest1}, \eqref{lest3} and \eqref{lest4}.

For any $n >  n_0$ let $f_n : [0,1] \to \NS$ be $(1 + \tau_0)d(X)$-Lipschitz function with $f_n([0,1]) = \Gamma_n^0$.
Since each $X_n$ is finite, we easily see by \eqref{d3} that the closure of $\bigcup_{n=0}^\infty X_n \subset \NS$, denoted by $Y$, is compact.
Thus also
$S(Y) := \{\, tz_1 + (1-t)z_2 :\, \text{$z_1,z_2 \in Y$, $0 \leq t \leq 1$}\, \}$
is compact (see \cite[Lemma 5.1]{MR2337487}).
Since now $\Gamma_n^0 \subset S(Y)$ for each $n$,
we find by the Ascoli-Arzela theorem a $(1 + \tau_0)d(X)$-Lipschitz function $f:[0,1] \to \NS$, which is the uniform limit of some subsequence of $(f_n)$.
We denote $\Gamma = f([0,1])$.

\section{Size of $X \backslash \Gamma$}

Let us assume for simplicity that $f_n$ converges to $f$ uniformly.
Denote $V= X\backslash\Gamma$. In this section our goal is to show that $\m(V)  \leq \tau_0d(X)$.
We will denote by $U(x,r)$ the closed ball in $\NS$ with center $x\in \NS$ and radius $r>0$.
In this section $\Hm$ denotes the 1-dimensional Hausdorff measure on $\NS$.
Recall that
\begin{align}
\label{kaava2}
  \Hm(\Gamma) \leq (1+\tau_0)d(X).
\end{align}
Notice that we can clearly assume that $\tau_0$ is small. Set
\[ V_1 = \left\{\, z \in V\, :\, \text{$\m(B(z,r)) \leq \tau_0r/20$ for some $r \in [ 6d(z,\Gamma),d(X) ]$}\, \right\}. \]
For each $z \in V_1$ choose $r(z) \in [ 6d(z,\Gamma),d(X) ]$ such that $\m(B(z,r(z))) \leq \tau_0r(z)/20$
and let $w(z) \in \Gamma$ be so that $d(z,w(z) )= d(z,\Gamma)$.
Denote $U_z = U(w(z),r(z)/6)$ and $5B_z =  B(w(z),5r(z)/6)$. Now $z \in U_z$ and $5B_z \subset B(z,r(z))$.
By the $5r$-covering lemma we find a countable set $V_1' \subset V_1$
such that $V_1 \subset \bigcup_{z\in V_1'} 5B_z$ and the family $\{\, U_z\, :\, z \in V_1'\, \}$ is disjoint, and we get
\begin{equation}
\label{V11}
  \m(V_1) \leq \sum_{z\in V_1'} \m(5B_z) \leq \sum_{z\in V_1'} \m(B(z,r(z))) \leq \frac{\tau_0}{20} \sum_{z\in V_1'} r(z).
\end{equation}
Assume first that $\Gamma$ leaves each $U_z$, $z \in V_1'$. Then $\Hm(\Gamma \cap U_z) \geq r(z)/6$ for each $z \in V_1'$.
Thus \eqref{V11}, the disjointness of the balls $U_z$ and \eqref{kaava2}
yield
\begin{equation}
\label{V1}
  \m(V_1) \leq \frac{3\tau_0}{10} \sum_{z\in V_1'} \Hm(\Gamma \cup U_z) \leq \frac{3\tau_0}{10} \Hm(\Gamma)
  \leq \frac{\tau_0d(X)}{3}.
\end{equation}
If $\Gamma \subset U_{z_0}$ for some $z_0 \in V_1'$ then $V_1' = \{z_0\}$ by the disjointness of the balls $U_z$, $z \in V_1'$,
and \eqref{V11} gives $\m(V_1) \leq \tau_0r(z_0)/20 \leq \tau_0d(X)/20$.

Denote $H = \bigcup_{n=n_0}^\infty H_n$, $2B(y) = B(q_m^{-1}(y), 2^{-m(y)+4})$ for $y \in H$.
We next estimate the measure of the set 
\[ V_2 = \bigcup_{y \in H} 2B(y). \]
By the construction the balls $U_y := U(y, 2^{-m(y)-2})$, $y \in H$, are disjoint.
Assuming that $\Gamma$ leaves each $U_y$, $y \in H$, we thus have (by \eqref{kaava2})
\begin{align*}
  \sum_{y\in H} 2^{-m(y)} \leq 4\sum_{y\in H} \Hm(\Gamma \cap U_y) \leq 4\Hm(\Gamma) \leq 5d(X).
\end{align*}
If $\Gamma \subset U_{y_0}$ for some $y_0 \in H$ then $H = \{y_0\}$ by the disjointness of the balls $U_y$, $y \in H$,
and we have
\begin{align*}
  \sum_{y\in H} 2^{-m(y)} = 2^{-m(y_0)} \leq 2^{-n_0} < 2d(X).
\end{align*}
Using these estimates and \eqref{Hdef} we get
\begin{equation}
\label{V2}
  \m(V_2) \leq \sum_{y \in H} \m\bigl(2B(y) \backslash \bigcup_{z \in H_{m(y)}} B(z)\bigr) \leq \sum_{y \in H} C_1\delta 2^{-m(y)} \leq 5C_1\delta d(X).
\end{equation}

Now let $z \in V \backslash (V_1 \cup V_2)$. Let $n(z)$ be the integer such that
\begin{equation}
\label{nz}
  2^{-n(z)+M_1} \leq d(z,\Gamma) < 2^{-n(z)+M_1+1}.
\end{equation}
Set $D(z) = B(z,6d(z,\Gamma))$.
If $6d(z,\Gamma) > d(X)$ then
by choosing $\varepsilon_1$ and $\delta$ small enough depending on $N_0$, $M_1$ and $\mu_0$ (and using Lemma~\ref{led}) we find
that $D(z) \cap \DP_{n(z)} \neq \emptyset$.
Else, since $z \not\in V_1$, by choosing $\varepsilon_1$ small enough depending on $N_0$, $M_1$ and $\tau_0$ and then using Lemma~\ref{led}
we find $w \in D(z)$ such that
\[  \m(B(w,2^{-n(z)-N_0})) > \eta\tau_02^{-n(z)+M_1}, \]
where $\eta > 0$ depends on $N_0$ and $M_1$.
Since $z \not\in V_1$ we have
\[ \m(B(w,2^{-m})) \geq \m(B(z,2^{-m-1})) > \tau_02^{-m-1}/20 \]
for all $n_0 \leq m \leq n(z) - M_1 - 5$. So by choosing
$\delta$ small enough depending on $N_0$, $M_1$ and $\tau_0$ we get that $w \in \DP_{n(z)}$.
Now $d(w,D_{n(z)}) \leq 2^{-n(z)+1}$ or $w \in B(y)$ for some $y \in H_{n(z)}$. In the latter case
$d(z,w) > 2^{-m(y) + 3}$, because $z \not\in V_2$.
Thus in both cases $d(w,X_{n(z)}) < 7d(z,\Gamma)$ and further $d(z,X_{n(z)}) < 13d(z,\Gamma)$.
Let $y(z) \in X_{n(z)}$ be such that $d(z,y(z)) = d(z,X_{n(z)})$. By Lemma~\ref{le2} and \eqref{nz} we have 
\begin{equation}
\label{dzy}
  2^{-n(z)+M_1-1} < d(z,y(z)) < 2^{-n(z)+M_1+5}.
\end{equation}

For $x \in X$ and $n \geq n_0$ we denote
\[ W_n(x) = B(x,2^{-n+M_1+6}) \cap \{\, z \in V \backslash (V_1 \cap V_2)\, :\, n(z) = n\, \}. \]

\begin{lemma}
\label{lemW}
  It holds that $\mu(W_n(x)) \leq \tau_02^{-n}/20$ for all $n \geq n_0$ and $x \in X$.
\end{lemma}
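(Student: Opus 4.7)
The approach I would take is proof by contradiction: assume $\mu(W_n(x)) > \tau_0 2^{-n}/20$ and aim to produce a point $z^* \in W_n(x)$ together with a radius $r^* \in [6d(z^*,\Gamma), d(X)]$ such that $\mu(B(z^*,r^*)) \leq \tau_0 r^*/20$. This would force $z^* \in V_1$, contradicting $z^* \in W_n(x) \subset V\setminus(V_1\cup V_2)$.

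The structural facts about $W_n(x)$ gathered just before the lemma will all be used. Each $z \in W_n(x)$ satisfies $z \in B(x, 2^{-n+M_1+6})$, $d(z,\Gamma) \in [2^{-n+M_1}, 2^{-n+M_1+1})$, and has a nearest vertex $y(z) \in X_n$ with $d(z,y(z)) < 2^{-n+M_1+5}$ (and in fact $d(z,y(z)) > 2^{-n+M_1-1}$, since $y(z)$ is forced close to $\Gamma$). Moreover, since $z \notin V_1$, one has $\mu(B(z,r)) > \tau_0 r/20$ for all $r \in [6d(z,\Gamma), d(X)]$, and since $z \notin V_2$, the vertex $y(z) \in H_n$ must have $m(y(z)) > n - M_1 - 2$. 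I would partition $W_n(x)$ into the Voronoi-type cells $Y(y) = \{z \in W_n(x) : y(z) = y\}$ indexed by the finite set $X_n \cap B(x, 2^{-n+M_1+7})$ and estimate $\mu(Y(y))$ for each $y$ individually. For each such estimate I plan to apply the $5r$-covering lemma to a family $\{B(z,\rho(z)) : z \in Y(y)\}$ with a suitable choice of $\rho(z)$ — either $\rho(z) = d(z,\Gamma)/6$, so that $B(z, 5\rho(z))$ is disjoint from $\Gamma$, or $\rho(z) = 6d(z,\Gamma)$, so that the non-$V_1$ lower density is available — and then combine the upper Ahlfors bound (ii), the length estimate $\mathcal{H}^1(\Gamma) \leq (1+\tau_0)d(X)$ from \eqref{kaava2}, and the orderability of $B(x, 2^{-n+M_1+1}) \cap X_n$ supplied by Lemma~\ref{leordom} together with the inductive hypotheses (H1)--(H4), which organize $X_n$ along $\Gamma$.

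The main obstacle is the tightness of the target bound. The factor $\tau_0/20$ appearing in $\tau_0 2^{-n}/20$ is exactly the $V_1$ density threshold, while the non-$V_1$ hypothesis only provides information at scales $r \geq 6d(z,\Gamma) \approx 2^{-n+M_1+3}$, that is, a factor $2^{M_1+3}$ above the scale $2^{-n}$ at which the bound must hold. A naive covering estimate that ignores the constraints $z \notin V_2$ and $n(z)=n$ yields only something of order $2^{-n+M_1}$, which is too weak by a factor of roughly $2^{M_1}/\tau_0$. To close the gap, the proof must simultaneously exploit that the non-$V_2$ condition pins down $y(z)$ and that the $n(z)=n$ condition confines $z$ to a thin annular neighborhood of $\Gamma$. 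I expect the argument to identify a specific $z^* \in W_n(x)$ — chosen, say, to maximize the mass concentrated near it — and a specific $r^*$ of order $2^{-n}$ up to a multiplicative constant depending on $M_1$, $C_0$ and $\tau_0$, for which the combined constraints force $\mu(B(z^*, r^*)) \leq \tau_0 r^*/20$, delivering the desired contradiction with $z^* \notin V_1$.
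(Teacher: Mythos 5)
Your proposal correctly starts with the contradiction hypothesis $\mu(W_n(x))>\tau_02^{-n}/20$, and you correctly flag the non-$V_1$, non-$V_2$ and $n(z)=n$ constraints as the data to be exploited. However, the target you aim for — producing a $z^*\in W_n(x)$ and an $r^*$ with $\mu(B(z^*,r^*))\le\tau_0 r^*/20$, hence $z^*\in V_1$ — is not where the contradiction lives, and you will not reach it along the route you describe. Since $\tau_0/20$ is exactly the $V_1$ threshold, the mass lower bound $\mu(W_n(x))>\tau_02^{-n}/20$ gives you no leverage to deduce an upper bound at a nearby scale: the non-$V_1$ condition works in your favour (giving lower bounds at large scales), not against you, and the Voronoi/covering decomposition you outline just reproduces the upper Ahlfors bound of order $C_02^{-n}$, as you yourself notice.

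The missing idea is that the contradiction is with the construction of $X_n$, not with the definition of $V_1$. Apply Lemma~\ref{led} to $W_n(x)$ (possible because its diameter is $\lesssim 2^{-n+M_1}$ and, under the contradiction hypothesis, its mass is $\gtrsim\tau_02^{-n}$; here $\varepsilon_1$ must be small enough that \eqref{eps1} feeds into Lemma~\ref{led}) to extract a single point $z\in W_n(x)$ with $\mu(B(z,2^{-n-N_0}))>\eta\tau_02^{-n}$. Now combine two density statements: this small-scale bound, and the large-scale bound $\mu(B(z,2^{-m}))>\tau_02^{-m}/20$ for $m\le n-M_1-4$ coming from $z\notin V_1$ (since $n(z)=n$ pins $d(z,\Gamma)\approx2^{-n+M_1}$). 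Together, taking $\delta$ small, they show $z\in\DP_n$. Since $z\notin V_2$ it also avoids $\bigcup_{y\in H_n}B(y)$, so by maximality of $D_n'$ one gets $d(z,D_n)\le2^{-n+1}$ and hence $d(z,X_n)\le2^{-n+1}$. That contradicts \eqref{dzy}, which forces $d(z,X_n)=d(z,y(z))>2^{-n+M_1-1}$. In short: the high-density point $z$ must be picked up by the construction as a candidate vertex, yet it is known to sit far from every vertex because it sits far from $\Gamma$. Your proposal never invokes Lemma~\ref{led}, never brings in $\DP_n$ or the maximality of $D_n'$, and the contradiction you aim for ($z^*\in V_1$) cannot be obtained; so as written the argument has a genuine gap.
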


\begin{proof}
Let $n \geq n_0$ and $x \in X$. Suppose to the contrary that $\m(W_n(x)) > \tau_02^{-n}/20$.
By choosing $\varepsilon_1$ small enough depending on $N_0$, $M_1$ and $\tau_0$ and then using Lemma~\ref{led}
we find $z \in W_n(x)$ such that
$\m(B(z,2^{-n-N_0})) > \eta\tau_02^{-n}$,
where $\eta > 0$ depends on $N_0$ and $M_1$.
As before, since $z \not\in V_1$ we have
$ \m(B(z,2^{-m})) > \tau_02^{-m}/20$
for all $n_0 \leq m \leq n - M_1 - 4$.
So by choosing
$\delta$ small enough depending on $N_0$, $M_1$ and $\tau_0$ we get that $z \in \DP_n$.
Since $z \not\in V_2$, we have $d(z,D_n) \leq 2^{-n+1}$ which contradicts \eqref{dzy}.
\end{proof}

Denote $V_3 = \{\, z \in V\backslash (V_1 \cup V_2)\, :\, N_{n(z)}^0(y(z)) \leq 1\, \}$.
By \eqref{dzy} and Lemma~\ref{lemW}
\begin{equation}
\label{V3}
  \m(V_3) < \sum_{n=n_0}^\infty \tau_02^{-n+1}/20 = \tau_02^{-n_0+2}/20 < \tau_0d(X)/2.
\end{equation}

Set
\[ V_4 = \left\{\, z \in V\backslash (V_1 \cup V_2)\, :\, \text{$\{z,v,w\} \not\in \om{\phi_1}$ for some $v,w \in Z(z)$}\, \right\}, \]
where $Z(z) = B(y(z),2^{-n(z)+M_1}) \cap X_{n(z)}$.
If $\#Z(z) \geq 2$ then $m_{n(z)}(v) \geq n(z) - M_1$ for each $v \in Z(z)$.
Let $z \in V_4$ and choose $v_1,v_2 \in Z(z)$ with $\{z,v_1,v_2\} \not\in \om{\phi_1}$.
As before, by choosing $N_0$ large enough depending on $\phi_1$ and $M_1$ we deduce that
$c(z,w_1,w_2) \geq c2^{-n(z)}$ for all $w_i \in B_{n(z)}(v_i)$, $i=1,2$, where $c$ is a positive constant depending on
$N_0$, $\phi_1$ and $M_1$. Thus
\[ \int_{B_{n(z)}(v_1) \times B_{n(z)}(v_2)} c(z,w_1,w_2)^2\, d\m^2(w_1,w_2) \geq c^2 \delta^2 2^{-2N_0}. \]
Choosing the constant $K$ large enough depending on $M_1$ we have that
$B_{n(z)}(v_1) \times B_{n(z)}(v_2) \subset \TK(X)_{z}$ and further
\begin{equation}
\label{V4}
  \m(V_4) \leq c^{-2}\delta^{-2} 4^{N_0} c^2_K(X,\m) \leq c^{-2}\delta^{-2} 4^{N_0}\varepsilon_0d(X).
\end{equation}

Let now $z \in V_5$, where $V_5 = V\backslash (V_1 \cup V_2 \cup V_3 \cup V_4)$.
We first show that $d(y(z),w) > 2^{-n(z)+M_1-1}$ for some $w \in N_{n(z)}^0(y(z))$.
Denote $N_{n(z)}^0(y(z)) = \{u,v\}$ and assume that $\{u,v\} \subset B(y(z),2^{-n(z)+M_1})$.
Recall that $\#N_{n(z)}^0(y(z)) = 2$ since $z \not\in V_3$.
Now $\{u,v,y(z)\} \in \om{\phi_1}$ and $uy(z)v$
by Lemma~\ref{leordom} and (H1).
Since $z \not\in V_4$ we further have $\{z,u,v,y(z)\} \in \om{\phi_1}$.
Choosing $\phi_1$ big enough depending on $M_1$, assuming $d(z,u) \leq d(z,v)$ and using \eqref{d1}, \eqref{dzy} and Lemma~\ref{lef}
we conclude $uzy(z)v$
and $d(u,y(z)) > d(z,y(z)) > 2^{-n(z)+M_1-1}$. So in each case we may choose $u(z) \in N_{n(z)}^0(y(z))$ such that
$d(u(z),y(z)) > 2^{-n(z)+M_1-1}$.
By Lemma~\ref{lelong2} we find Cauchy sequences $(u_n(z))_{n}$ and $(y_n(z))_{n}$ so that
$\{u_n(z),y_n(z)\} \in \E_n^0$ and $d(u(z),u_n(z)), d(y(z),y_n(z)) < 2^{-n(z)+6}$ for all $n \geq n(z)$.
By taking $M_1 \geq 9$
\begin{equation}
\label{dI}
  d(u_n(z),y_n(z)) > 2^{-n(z)+M_1-2}
\end{equation}
for all $n \geq n(z)$.

For $n \geq n_0$ and $e \in \E_n^0$ we denote
$V_5^n = \{\, z \in V_5\, :\, n(z) \leq n\, \}$ and
\[V_5^n(e) = \{\, z \in V_5^n\, :\, \{u_n(z),y_n(z)\} = e\, \}. \]

\begin{lemma}
\label{lemVne}
  It holds that $\m(V_5^n(e)) \leq \tau_0d(a,b)/20$ for each $n \geq n_0$ and $e = \{a,b\} \in \E_n^0$.
\end{lemma}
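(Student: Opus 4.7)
The plan is to stratify $V_5^n(e)$ according to the value of $m = n(z)$, contain each slice in one of the balls handled by Lemma~\ref{lemW}, and sum a geometric series whose starting index is controlled by \eqref{dI}.

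For each integer $m$ with $n_0 \leq m \leq n$ set $V_5^{n,m}(e) = \{z \in V_5^n(e) : n(z) = m\}$. For such a point $z$, estimate \eqref{dzy} gives $d(z, y(z)) < 2^{-m+M_1+5}$, while $d(y(z), y_n(z)) < 2^{-m+6}$ by construction of the Cauchy sequence produced from Lemma~\ref{lelong2}. Consequently, provided $M_1$ is at least a small absolute constant,
\[
d(z, y_n(z)) \leq d(z,y(z)) + d(y(z),y_n(z)) < 2^{-m+M_1+5} + 2^{-m+6} \leq 2^{-m+M_1+6}.
\]
By definition of $V_5^n(e)$ we have $y_n(z) \in \{a,b\}$, and since $z \in V_5$ avoids $V_1 \cup V_2$, this shows
\[
V_5^{n,m}(e) \subset W_m(a) \cup W_m(b).
\]
Lemma~\ref{lemW} then produces $\m(V_5^{n,m}(e)) \leq \tau_0 2^{-m}/10$.

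Next I use \eqref{dI} to cut off the smallest values of $m$. Since $\{u_n(z), y_n(z)\} = \{a,b\}$ and $n(z) = m$, estimate \eqref{dI} yields $d(a,b) > 2^{-m+M_1-2}$, equivalently $2^{-m} < 2^{2-M_1} d(a,b)$. Letting $m^*$ be the smallest integer for which this strict inequality holds, the slice $V_5^{n,m}(e)$ is empty for $m < m^*$, and summing the resulting geometric series gives
\[
\m(V_5^n(e)) \leq \sum_{m=m^*}^n \frac{\tau_0 \cdot 2^{-m}}{10} \leq \frac{\tau_0 \cdot 2^{-m^*+1}}{10} \leq \frac{\tau_0 \cdot 2^{3-M_1}\, d(a,b)}{10}.
\]
Choosing $M_1 \geq 4$, which is already forced by several earlier steps of the construction, this is at most $\tau_0 d(a,b)/20$, yielding the claim.

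The argument is essentially bookkeeping once the earlier lemmas are in hand; the only point that requires care is aligning the constants so that the $2^{-m+M_1+6}$-ball around $y_n(z)$ that contains $z$ is exactly the ball appearing in the definition of $W_m$, and so that the lower bound on $m$ from \eqref{dI} dominates the geometric series strongly enough to produce a constant of $1/20$ on the right-hand side. I do not anticipate any genuine obstacle.
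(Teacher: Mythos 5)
Your proof is correct and follows the same route as the paper: stratify $V_5^n(e)$ by the value of $n(z)=m$, use \eqref{dzy} together with the Cauchy sequence bound to place each slice inside $W_m(a)\cup W_m(b)$, invoke Lemma~\ref{lemW}, and sum the geometric series whose starting index is controlled by \eqref{dI}. The paper compresses exactly this computation into the bound $\m(V_5^n(e))\leq \tau_02^{4-M_1}d(a,b)/20$; your version merely makes the bookkeeping explicit, and the constant alignment ($M_1\geq 4$, already forced since $M_1\geq 9$) is exactly as you say.
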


\begin{proof}
Let $n \geq n_0$ and $e = \{a,b\} \in \E_n^0$.
By \eqref{dzy} and \eqref{dI} for any $z \in V_5^n(e)$
\begin{gather*}
  d(z,\{a,b\}) < d(z,y(z)) + 2^{-n(z)+6} < 2^{-n(z)+M_1+6}, \\
  2^{-n(z)} < 2^{2-M_1}d(a,b).
\end{gather*}
Thus Lemma~\ref{lemW} gives
$\m(V_5^n(e)) \leq \tau_02^{4-M_1}d(a,b)/20$.
\end{proof}

Lemma~\ref{lemVne} and \eqref{lGammam} now imply that
$\mu(V_5^n) \leq \tau_0l(\Gamma_n^0)/20 \leq \tau_0d(X)/10$ for all integers $n \geq n_0$.
Hence
\begin{equation}
\label{V5}
  \m(V_5) \leq \tau_0d(X)/10.
\end{equation}

Combining \eqref{V1}, \eqref{V2}, \eqref{V3}, \eqref{V4} and \eqref{V5}, choosing $\delta$ small enough depending on
$C_1$ and $\tau_0$, and $\varepsilon_0$ small enough depending on $4^{N_0}/(c\delta)^2$ and $\tau_0$, we obtain
$\m(V) \leq \tau_0d(X)$.

\section*{Acknowledgements}

The author would like to thank Guy David for the possibility to use his unpublished notes as the basis for this work.
Thanks are also due to Raanan Schul and Pertti Mattila for helpful conversations.

\bibliographystyle{plain}
\bibliography{viitteet}

\end{document}